\theoremstyle{definition}
\newtheorem{definition}{Definition}[section]
\theoremstyle{proposition}
\newtheorem{proposition}{Proposition}[section]
\theoremstyle{corollary}
\newtheorem{corollary}{Corollary}[section]
\theoremstyle{theorem}
\newtheorem{theorem}{Theorem}[section]
\theoremstyle{lemma}
\newtheorem{lemma}{Lemma}[section]
\theoremstyle{definition}
\newtheorem{example}{Example}[section]
\theoremstyle{remark}
\title{Homotopical selection principles in bitopological dynamical systems}
\author{Santanu Acharjee$^{1,\dagger}$, Kabindra Goswami$^2$, 
Hemanta Kumar Sarmah$^3$ }
\date {$^{1,3}$Department of Mathematics\\
 Gauhati University\\
 Guwahati-781014, Assam, India\\
$^{2}$Department of Mathematics\\ Goalpara College\\ Goalpara-783101, Assam, India\\
E-mails: $^{1}$sacharjee326@gmail.com, $^{2}$kabindragoswami@gmail.com, $^{3}$hsarmah@gauhati.ac.in\\
$^\dagger$Corresponding author: Santanu Acharjee}
\begin{document}
\maketitle

\begin{abstract}
Homotopy deals with the intuitive idea of continuous deformation of a continuous map between two topological spaces. In this paper, we introduce homotopical selection principles in bitopological dynamical systems ($BTDS$). Here, we define $BTDS-$homotopy, $BTDS-$iteration homotopy and $BTDS-$path homotopy, and then using these notions we define various selection properties viz.  H-Rothberger property, H-Menger property, PH-Rothberger property, PH-Menger property and their weaker versions. We also discuss several results connecting these concepts in bitopological dynamical systems.
\end{abstract}
{\bf Keywords:} Bitopological dynamical systems; homotopy; selection principles; path; path homotopy.\\
{\bf 2020 AMS Subject Classifications:}  54E55; 37B20; 37B99; 55P05; 55P10.
\section{Introduction}

Homotopy deals with the intuitive idea of continuous deformation of a continuous map between two topological spaces. According to \cite{40}, the idea of homotopy for the continuous maps of the unit interval was originated by Jordan \cite{39} in 1866 and that for loops was introduced by Poincaré \cite{41} in 1895. One may refer to \cite{42} for more on history of homotopy. The concept of homotopy equivalence was introduced by Hurewicz \cite{43} [see also \cite{40, 42}]. Later, many researchers studied the concept of homotopy and the study is still going on. Recently, Badiger and Venkatesh \cite{38} introduced generalised (g)-homotopy of g-continuous maps. \vskip 2mm

Kelly \cite{17} introduced the concept of bitopological
spaces from the perspective of quasi-pseudo metric spaces. Due to the flexibility of having two topologies on the same set, it became attractive to the topologists as well as researchers working in different fields. Recent applications of bitopology can be found in computer science \cite{28}, economics (\cite{26}, \cite{27}), medical sciences \cite{25}, etc. Some recent works in bitopological space can be found in \cite{29,30,31}.  \vskip 2mm

Bitopological dynamical system is a new branch in the theory of dynamical systems recently introduced by Acharjee et al. \cite{2}. It allows the study of dynamical properties of two parallel states (represented by two topologies) at a time. The application of bitopological dynamical system can be found in child birth process \cite{2}. In \cite{34}, Nada and Zohny made three conjectures regarding human embryo development. Acharjee et al. \cite{2,33,32} disproved these conjectures using bitopological dynamical system. For other works in bitopological dynamical systems, one may refer to \cite{15,35}.  \vskip 2mm

In 1924, Menger \cite{6} started the investigation of selective covering properties of topological spaces. After that Hurewicz \cite{14}, Rothberger \cite{7} and many other topologists continued the study of those covering properties. In 1996, Scheepers \cite{5} gave a general definition of selection principles by applying selection hypothesis to different open covers and originated a systematic investigation
of selection principles in topology. In recent years, selection principles in topological spaces have been studied by many researchers \cite{4,8,11,12,13}. The systematic study of selection principles in bitopological space was originated in \cite{9,37}. For recent works of selection principles in bitopological space, one may refer to \cite{3,10,22,36}.  \vskip 2mm

Till now no study has been done on homotopy from the perspective of bitopological spaces. In this paper, we introduce homotopy in bitopological dynamical systems via selection principles. First we define $BTDS-$homotopy, $BTDS-$iteration homotopy and $BTDS-$path homotopy and then using these notions we define various selection properties in bitopological dynamical systems. 

\section{Preliminaries} In this section, we procure some existing definitions from bitopological dynamical systems and selection principles which will be required in our next sections. One may refer \cite{20, 1, 21} for  definitions of homotopy,  homotopically equivalent, path and path homotopy.  Throughout the paper; $ \mathbb{R} $, $ \mathbb{N} $ and $ \mathbb{N}_0 $ denote the set of real numbers, the set of positive
integers and the set of non-negative integers respectively. If $(X,\tau)$ is a topological space, $A \subseteq X$ and $\mathcal{U}$ is a collection of subsets of $X$, then $\overline{A}$ denotes the closure of $A$, and  $\bigcup \mathcal{U}=\bigcup\{U:U\in \mathcal{U}\}$. If $ (X,\tau_{1} ,\tau_{2}) $ is a bitopological space, then $\tau_{1}-cl(A)$ denotes the closure of $A$ with respect to the topology $\tau_1$ and $\tau_{2}-cl(A)$ denotes the closure of $A$ with respect to the topology $\tau_2$.\\

\begin{definition} \cite{2}
A bitopological dynamical system is a pair $ (X,f) $, where $
(X,\tau_{1} ,\tau_{2}) $ is a bitopological space and $
f:X\rightarrow X $ is a pairwise continuous map. The dynamics is
obtained by iterating the map.

The forward orbit of a point $ x\in X $ under $ f $ is defined as $
O_{+}(x)=\{f^{n}(x):n\in \mathbb{N}_0\}, $ where $ f^{n} $ denotes the
$ n^{th} $ iteration of the map $ f $. If $ f $ is a homeomorphism,
then the backward orbit of $ x $ is the set $
O_{-}(x)=\{f^{-n}(x):n\in \mathbb{N}_0\} $ and the full orbit of $ x $
(or simply orbit of $ x $) is the set $ O(x)=\{f^{n}(x):n\in
\mathbb{Z}\} $.

Here, homeomorphism of $ f $ indicates homeomorphism of the function
$ f:(X,\tau_{i}) \rightarrow (X,\tau_{i}) $ separately for $ i
\in \{1,2\} $ as it is clear in terms of bitopological space. Thus,
we can consider pairwise homeomorphism equivalently.
\end{definition}
\begin{definition} \cite{3}
Let $X$ be a topological space. If $\mathcal{A}$ and $\mathcal{B}$ are collections of subsets of a space $X$, then the symbol $\mathcal{B}<\mathcal{A}$ denotes the fact that for each $B \in \mathcal{B}$ there is $A \in \mathcal{A}$ with $B \subset A$.
\end{definition}

In topology, selection principles have several applications. One may refer \cite{5,10,22} for fundamental notions related to selection principles in topology.\\

\begin{definition} \cite{9}
A bitopological space $(X,\tau_1 ,\tau_2)$ is said to be $\delta_2$-Menger if for each sequence $(\mathcal{U}_n : n \in \mathbb{N})$ of open covers of $(X, \tau_i)$, $i=1,2$, there is a sequence $(\mathcal{V}_n : n \in \mathbb{N})$ of finite families of $\tau_j$-open sets, $j \neq i$, such that for each $n$, $\mathcal{V}_n < \mathcal{U}_n$ and $\bigcup_{n\in \mathbb{N}} \mathcal{V}_n =X$.
\end{definition}
\begin{definition} \cite{9}
A bitopological space $(X,\tau_1 ,\tau_2)$ is said to be $(\tau_i ,\tau_j)$-weakly Menger, $i,j=1,2,i \neq j,$ if for each sequence $(\mathcal{U}_n : n \in \mathbb{N})$ of open covers of $(X, \tau_i)$ there is a sequence $(\mathcal{V}_n : n \in \mathbb{N})$ of finite families such that for each $n$, $\mathcal{V}_n \subset \mathcal{U}_n$ and $X=\tau_{j}-cl(\bigcup_{n\in \mathbb{N}} \mathcal{V}_n), j \neq i.$
\end{definition}
\begin{definition} \cite{9}
A bitopological space $(X,\tau_1 ,\tau_2)$ is said to be $(\tau_i ,\tau_j)$-almost Menger, $i,j=1,2,i \neq j,$ if for each sequence $(\mathcal{U}_n : n \in \mathbb{N})$ of open covers of $(X, \tau_i)$ there is a sequence $(\mathcal{V}_n : n \in \mathbb{N})$ of finite families such that for each $n$, $\mathcal{V}_n \subset \mathcal{U}_n$ and $X=\bigcup_{n\in \mathbb{N}}(\bigcup_{V\in \mathcal{V}_n} \tau_{j}-cl(V))$.
\end{definition}
\begin{definition} \cite{10}
A bitopological space $(X,\tau_1 ,\tau_2)$ is said to be $(i ,j)$-almost Rothberger, $i,j=1,2,i \neq j,$ if for each sequence $(\mathcal{U}_n : n \in \mathbb{N})$ of $\tau_i$-open covers of $X$, there exists a sequence $(U_n : n \in \mathbb{N})$ such that, for any $n$, $U_n \in \mathcal{U}_n$ and $X=\bigcup_{n\in \mathbb{N}} \tau_{j}-cl(U_n )$.
\end{definition}
\begin{definition} \cite{16,24}
A topological space $X$ is a $P$-space if the union of countably many closed subsets of $X$ is closed in $X$.
\end{definition}
\begin{definition} \cite{18}
A bitopological space $ (X,\tau_{1} ,\tau_{2}) $ is pairwise $T_1$ if for each pair of distinct points $x$ and $y$ of $X$ there is a $\tau_{1}$-open set $U$ and a $\tau_{2}$-open set $V$ such that $x\in U,y\notin U$ and $y \in V, x \notin V$.
\end{definition}
\begin{definition} \cite{17}
Let $ (X,\tau_{1} ,\tau_{2}) $ be a bitopological space. Then $\tau_{1}$ is said to be regular with respect to $\tau_{2}$ if, for each point $x$ in $X$ and each $\tau_{1}$-closed set $P$ such that $x \notin P$, there is a $\tau_{1}$-open set $U$ and a $\tau_{2}$-open set $V$ such that $x \in U$, $P\subseteq V$, and $U \cap V=\emptyset$. The bitopological space  $ (X,\tau_{1} ,\tau_{2}) $ is called pairwise regular if $\tau_{1}$ is regular with respect to $\tau_{2}$ and vice-versa.
\end{definition}
\begin{proposition} \cite{18}
If $ (X,\tau_{1} ,\tau_{2}) $ is a bitopological space, the following are equivalent:
\begin{enumerate}
    \item $\tau_{1}$ is regular with respect to $\tau_{2}$,
    \item for each point $x \in X$ and $\tau_{1}$-open set $G$ containing $x$ there is a $\tau_{1}$-open set $H$ such that $x\in H \subseteq \tau_{2}-cl(H) \subseteq G$,
    \item for each point $x\in X $ and $\tau_{1}$-closed set $K$ such that $x\notin K$, there is a $\tau_{1}$-open set $M$ such that $x \in M$ and $(\tau_{2}-cl(M))\cap K=\emptyset$.
\end{enumerate}
\end{proposition}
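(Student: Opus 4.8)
The plan is to prove the proposition as a cycle of implications $(1)\Rightarrow(2)\Rightarrow(3)\Rightarrow(1)$, using nothing beyond complementation and the two elementary facts that a set is contained in its closure and that its closure is the smallest closed set containing it. The one point requiring genuine care — and the reason the statement is not literally the classical single-topology characterization of regularity — is the bookkeeping of which topology each object lives in: the separating open sets $U,H,M$ must be chosen $\tau_{1}$-open, the sets $K,P$ are $\tau_{1}$-closed, while every closure $\tau_{2}\text{-}cl(\cdot)$ is taken in $\tau_{2}$, so complements of $\tau_{2}$-closures are $\tau_{2}$-open, not $\tau_{1}$-open.

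For $(1)\Rightarrow(2)$, I would start from a point $x$ and a $\tau_{1}$-open set $G$ with $x\in G$, and apply regularity of $\tau_{1}$ with respect to $\tau_{2}$ to $x$ and the $\tau_{1}$-closed set $P=X\setminus G$ (which does not contain $x$). This yields a $\tau_{1}$-open $U\ni x$ and a $\tau_{2}$-open $V\supseteq P$ with $U\cap V=\emptyset$. Then $U\subseteq X\setminus V$, the set $X\setminus V$ is $\tau_{2}$-closed, and $X\setminus V\subseteq G$ because $P\subseteq V$; hence $\tau_{2}\text{-}cl(U)\subseteq X\setminus V\subseteq G$, and $H=U$ is the required $\tau_{1}$-open set. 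For $(2)\Rightarrow(3)$, given a $\tau_{1}$-closed $K$ with $x\notin K$, the set $G=X\setminus K$ is $\tau_{1}$-open and contains $x$; applying (2) produces a $\tau_{1}$-open $H$ with $x\in H\subseteq\tau_{2}\text{-}cl(H)\subseteq G=X\setminus K$, so $(\tau_{2}\text{-}cl(H))\cap K=\emptyset$ and $M=H$ works.

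For $(3)\Rightarrow(1)$, given $x$ and a $\tau_{1}$-closed $P$ with $x\notin P$, I would invoke (3) to get a $\tau_{1}$-open $M\ni x$ with $(\tau_{2}\text{-}cl(M))\cap P=\emptyset$, and then set $U=M$ and $V=X\setminus\tau_{2}\text{-}cl(M)$. Here $U$ is $\tau_{1}$-open, $V$ is $\tau_{2}$-open, $P\subseteq V$ by disjointness, and $U\cap V=\emptyset$ since $M\subseteq\tau_{2}\text{-}cl(M)$; this is exactly the defining condition for $\tau_{1}$ to be regular with respect to $\tau_{2}$. I do not expect a real obstacle in any step: each implication is a short manipulation, and the only thing one can get wrong is to pick a separating set from the wrong topology (e.g.\ trying to use $X\setminus\tau_{2}\text{-}cl(M)$ as a $\tau_{1}$-open set), so the write-up should emphasize the topology of each set as it is introduced.
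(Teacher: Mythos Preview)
Your proof is correct and complete: each implication is handled by the standard complementation argument, and you have kept careful track of which topology each set belongs to, which is indeed the only subtle point. Note, however, that the paper does not actually give a proof of this proposition; it is quoted in the preliminaries from Reilly~\cite{18} without argument, so there is no paper proof to compare against. Your cycle $(1)\Rightarrow(2)\Rightarrow(3)\Rightarrow(1)$ is the natural one and would be the expected proof in the cited source.
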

\begin{definition} \cite{18}
A bitopological space $ (X,\tau_{1} ,\tau_{2}) $ is pairwise $T_3$ if it is pairwise regular and pairwise $T_1$.
\end{definition} 
\begin{definition} \cite{19}
In a bitopological space $ (X,\tau_{1} ,\tau_{2}) $, $\tau_{1}$ is locally (countably) compact with respect to $\tau_{2}$ if each point of $X$ has a $\tau_{2}$ neighbourhood which is $\tau_{1}$ (countably) compact.
\end{definition}
We will say that a bitopological space $ (X,\tau_{1} ,\tau_{2}) $ is pairwise locally compact if $\tau_{1}$ is locally compact with respect to $\tau_{2}$ and $\tau_{2}$ is locally compact with respect to $\tau_{1}$.
\begin{proposition} \cite{19}
If $ (X,\tau_{1} ,\tau_{2}) $ is pairwise regular and pairwise Hausdorff and $\tau_{1}$ is locally compact with respect to $\tau_{2}$, then $\tau_{1}\subseteq \tau_{2}$.
\end{proposition}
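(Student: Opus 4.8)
My plan is to prove the inclusion by the local criterion: it suffices to show that for every $\tau_1$-open set $G$ and every $x\in G$ there is a $\tau_2$-open set $W$ with $x\in W\subseteq G$. Fix such $G$ and $x$. The first step uses the local compactness hypothesis: since $\tau_1$ is locally compact with respect to $\tau_2$, the point $x$ has a $\tau_2$-neighbourhood $K$ which is $\tau_1$-compact, so we may choose $V\in\tau_2$ with $x\in V\subseteq K$. We then work inside $K$: put $C:=K\setminus G$. Because $G$ is $\tau_1$-open, $C$ is closed in the subspace $(K,\tau_1|_K)$, hence $\tau_1$-compact, and $x\notin C$. If $C=\emptyset$ then already $x\in V\subseteq K\subseteq G$, so we may assume $C\neq\emptyset$.

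The heart of the argument is to separate the point $x$ from the $\tau_1$-compact set $C$ using the two topologies. For each $z\in C$ we have $z\neq x$, so pairwise Hausdorffness supplies a $\tau_1$-open set $U_z$ with $z\in U_z$ and a $\tau_2$-open set $V_z$ with $x\in V_z$ such that $U_z\cap V_z=\emptyset$. The family $\{U_z:z\in C\}$ is a $\tau_1$-open cover of $C$; by $\tau_1$-compactness there are $z_1,\dots,z_n$ with $C\subseteq U_{z_1}\cup\cdots\cup U_{z_n}$. Set $W:=V\cap V_{z_1}\cap\cdots\cap V_{z_n}$, a $\tau_2$-open set containing $x$ with $W\subseteq V\subseteq K$. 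Since $W\cap U_{z_i}\subseteq V_{z_i}\cap U_{z_i}=\emptyset$ for each $i$, we get $W\cap C=\emptyset$, and hence $W\subseteq K\setminus C=K\cap G\subseteq G$. This produces the required $\tau_2$-open neighbourhood of $x$ inside $G$, so $G\in\tau_2$ and $\tau_1\subseteq\tau_2$.

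The step I expect to be most delicate is not a calculation but keeping the two topologies in their correct roles: the neighbourhoods of $x$ assembled into $W$ must be $\tau_2$-open while the sets covering $C$ must be $\tau_1$-open, and one has to observe that $\tau_1$-compactness of $K$ legitimately passes to the $\tau_1$-closed subset $C$ (an intrinsic, topology-by-topology fact, hence harmless). I would also remark that pairwise regularity enters this argument, if at all, only to guarantee the point-versus-compact-set separation; with the usual symmetric formulation of pairwise Hausdorffness the construction above already supplies it, so the conclusion can in fact be obtained from pairwise Hausdorffness together with local compactness, with pairwise regularity available as a convenient strengthening. One may, alternatively, phrase the separation step via condition (2) of the preceding Proposition, but the compactness argument above is the cleanest route.
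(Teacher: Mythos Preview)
The paper does not prove this proposition; it is quoted from \cite{19} (Raghavan and Reilly) in the Preliminaries section without proof, so there is no argument in the paper to compare against.

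Your proof is correct. The steps are sound: $C=K\setminus G$ is $\tau_1$-closed in the $\tau_1$-compact set $K$, hence $\tau_1$-compact; pairwise Hausdorffness (applied to the ordered pair $(z,x)$) yields for each $z\in C$ a $\tau_1$-open $U_z\ni z$ and a $\tau_2$-open $V_z\ni x$ with $U_z\cap V_z=\emptyset$; $\tau_1$-compactness of $C$ gives a finite subcover; and the finite intersection $W=V\cap V_{z_1}\cap\cdots\cap V_{z_n}$ is the required $\tau_2$-open neighbourhood of $x$ inside $G$. Your remark that pairwise regularity is not actually used in this argument is accurate: the symmetric form of pairwise Hausdorffness together with the local compactness hypothesis already suffices.
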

\vskip 0.5cm
\section{Homotopy in Bitopological Dynamical System}

In this section, first we introduce $BDTS$-homotopy, $BDTS$-iteration homotopy and related results. We introduce an important property of a pairwise continuous map between two bitopological spaces.

\begin{lemma} Let $ (X,\tau_{1} ,\tau_{2}) $ and $ (Y,\psi_{1} ,\psi_{2}) $ be two  bitopological spaces. Also, let $
f:(X,\tau_{1} ,\tau_{2})\rightarrow (X,\tau_{1} ,\tau_{2}) $ and $g:(Y,\psi_{1} ,\psi_{2})\rightarrow (Y,\psi_{1} ,\psi_{2}) $ be two pairwise continuous maps. Then, if $
F:(X,\tau_{1} ,\tau_{2})\rightarrow (Y,\psi_{1} ,\psi_{2}) $ is a pairwise continuous map, then the compositions $F\circ f$ and $g\circ F$ are pairwise continuous maps from $(X,\tau_{1} ,\tau_{2})$ to $(Y,\psi_{1} ,\psi_{2})$.
\end{lemma}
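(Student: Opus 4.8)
The plan is to unravel the definition of pairwise continuity and reduce everything to the classical fact that a composition of continuous maps between topological spaces is continuous. Recall that a map $h\colon(Z,\rho_1,\rho_2)\to(W,\sigma_1,\sigma_2)$ is pairwise continuous precisely when, for each $i\in\{1,2\}$, the map $h\colon(Z,\rho_i)\to(W,\sigma_i)$ is continuous in the usual single-topology sense. Hence it suffices to verify each of the two compositions separately for the index $i=1$ and for the index $i=2$.

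First I would fix $i\in\{1,2\}$ and treat $F\circ f$. By hypothesis $f\colon(X,\tau_i)\to(X,\tau_i)$ is continuous and $F\colon(X,\tau_i)\to(Y,\psi_i)$ is continuous, so their composite $F\circ f\colon(X,\tau_i)\to(Y,\psi_i)$ is continuous. Concretely, for a $\psi_i$-open set $W\subseteq Y$ one has $(F\circ f)^{-1}(W)=f^{-1}\big(F^{-1}(W)\big)$; here $F^{-1}(W)$ is $\tau_i$-open by pairwise continuity of $F$, and then $f^{-1}\big(F^{-1}(W)\big)$ is $\tau_i$-open by pairwise continuity of $f$. Since this holds for both $i=1$ and $i=2$, the map $F\circ f$ is pairwise continuous.

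The argument for $g\circ F$ is symmetric. Fix $i\in\{1,2\}$ and a $\psi_i$-open set $W\subseteq Y$; then $(g\circ F)^{-1}(W)=F^{-1}\big(g^{-1}(W)\big)$, where $g^{-1}(W)$ is $\psi_i$-open by pairwise continuity of $g$, and applying $F^{-1}$ to it yields a $\tau_i$-open subset of $X$ by pairwise continuity of $F$. Thus $g\circ F\colon(X,\tau_i)\to(Y,\psi_i)$ is continuous for $i=1,2$, that is, $g\circ F$ is pairwise continuous, and the proof is complete.

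I do not expect any genuine obstacle: the statement is essentially the transitivity of continuity read off one coordinate topology at a time. The only point requiring (minor) care is keeping the indices matched, so that continuity of $F$ is always invoked with the same index $i$ as continuity of $f$ (respectively $g$), ensuring the iterated preimage remains inside a single topology throughout.
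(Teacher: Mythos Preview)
Your proposal is correct and follows essentially the same approach as the paper's proof: both verify pairwise continuity by taking a $\psi_i$-open set, expanding the preimage of the composite as an iterated preimage, and applying the pairwise continuity of each factor in turn. The only cosmetic difference is that you parametrize over $i\in\{1,2\}$ at once, while the paper writes out the case $i=1$ explicitly and then appeals to ``similarly'' for $i=2$.
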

\begin{proof} We shall denote the $\tau_{1} -$open set of $X$ as $U_i$ and $\tau_{2} -$open set of $X$ as $V_i$, where $i=1,2,3,...$. Also, we shall denote the $\psi_{1} -$open set of $Y$ as $M_j$ and $\psi_{2} -$open set of $Y$ as $N_j$, where $j=1,2,3,...$. Now, let $M_j$ be an arbitrary $\psi_{1} -$open set. Then, 
\begin{align*}
(F\circ f)^{-1}(M_j) & =(f^{-1}\circ F^{-1})(M_j)\\
& = f^{-1}(F^{-1}(M_j))\\
& =f^{-1}(U_i), \; (\text{let,} \; U_i =F^{-1}(M_j))\\.
& =U_k, \; (\text{let,} \; U_k =f^{-1}(U_i))\\
& = \text{a}\; \tau_1 -\text{open set}.
\end{align*}
and 
\begin{align*}
(g\circ F)^{-1}(M_j) & =(F^{-1}\circ g^{-1})(M_j)\\
& = F^{-1}(g^{-1}(M_j))\\
& =F^{-1}(M_l), \; (\text{let,} \; M_l =g^{-1}(M_j))\\
& =U_l, \; (\text{let,} \; U_l =F^{-1}(M_l))\\
& = \text{a}\; \tau_1 -\text{open set}.
\end{align*}
Similarly for any $\psi_{2} -$open set $N_j$, $(F\circ f)^{-1}(N_j)=$ a $\tau_{2} -$open set and $(g\circ F)^{-1}(N_j)=$ a $\tau_{2} -$open set. Hence, the mappings $F\circ f$ and $g\circ F$ are pairwise continuous maps from $(X,\tau_{1} ,\tau_{2})$ to $(Y,\psi_{1} ,\psi_{2})$.
\end{proof}

Now, we introduce $BDTS-$homotopy and $BDTS-$iteration homotopy.
\begin{definition}
Let $(X,f)$ and $(Y,g)$ be two bitopological dynamical systems $(BTDSs)$, where $f:(X,\tau_{1} ,\tau_{2})\rightarrow (X,\tau_{1} ,\tau_{2}) $ and $g:(Y,\psi_{1} ,\psi_{2})\rightarrow (Y,\psi_{1} ,\psi_{2}) $ are two pairwise continuous maps. Also, let $F:(X,\tau_{1} ,\tau_{2})\rightarrow (Y,\psi_{1} ,\psi_{2}) $  be a pairwise continuous map. Then, \\

(i) $f$ is said to be $BTDS-$homotopic  to $g$ relative to the map $F$ if there exists a pairwise continuous map $H:X\times [0,1] \rightarrow Y$ such that $H(x,0)=(F\circ f)(x)$ and $H(x,1)=(g\circ F)(x)$, for all $x \in X$.\\

The map $H$ is called a $BTDS-$homotopy between $f$ and $g$. If there exists a $BTDS-$homotopy between $f$ and $g$, we write $f\approx^{H}_{F} g$. Thus, geometrically $f\approx^{H}_{F} g$ simply means the pairwise continuous deformation of the map $F\circ f$ to the map $g\circ F$ as time $t$ varies from $0$ to $1$. Here, H in ``$\approx^{H}_{F}$" indicates the word `homotopy' in short. \\

(ii) $f$ is said to be $BTDS-$iteration homotopic to $g$ relative to the map $F$ if there exists a pairwise continuous map $H:X\times [0,1] \rightarrow Y$ such that $H(x_{n+1},0)=(F\circ f)(x_{n})$ and $H(x_{n+1},1)=(g\circ F)(x_{n})$, $n=0, 1,2,3,...$.\\

The map $H$ is called an $BTDS-$iteration homotopy between $f$ and $g$. If there exists an $BTDS-$iteration homotopy between $f$ and $g$, we write $f\approx^{IH}_{F} g$. Thus, geometrically $f\approx^{IH}_{F} g$ simply means the pairwise continuous deformation of the map $F\circ f$ to the map $g\circ F$ with respect to the iteration as time $t$ varies from $0$ to $1$.
\end{definition}

Now, we prove some fundamental results of $BTDS-$homotopy and\\ $BTDS-$iteration homotopy.
\begin{theorem}
$\approx^{H} $ is an equivalence relation.
\end{theorem}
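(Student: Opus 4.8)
The plan is to verify the three defining properties of an equivalence relation for $\approx^{H}$, understood as homotopy of pairwise continuous self-maps (equivalently, the relation of the preceding Definition with the connecting map $F$ taken to be the identity, which is the reading the unsubscripted symbol $\approx^{H}$ invites). Throughout, $X\times[0,1]$ is given the two product topologies $\tau_1\times\mathcal{E}$ and $\tau_2\times\mathcal{E}$, where $\mathcal{E}$ is the Euclidean topology on $[0,1]$, and the crux of every step is that the homotopy we build must be checked to be \emph{pairwise} continuous, not merely continuous.

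For reflexivity, given a pairwise continuous $f$ I would take the stationary homotopy $H(x,t)=(F\circ f)(x)$ for all $t$; since $H$ is the composition of the (pairwise continuous) projection $X\times[0,1]\to X$ with the pairwise continuous map $F\circ f$ supplied by the Lemma above, it is pairwise continuous, and its two endpoint conditions hold on the nose, so $f\approx^{H}f$. For symmetry, if $H$ witnesses $f\approx^{H}g$, I would set $\widetilde{H}(x,t)=H(x,1-t)$; the self-map $(x,t)\mapsto(x,1-t)$ of $X\times[0,1]$ is its own inverse and is continuous for each $\tau_i\times\mathcal{E}$ because $t\mapsto 1-t$ is a homeomorphism of $[0,1]$, hence it is a pairwise homeomorphism and $\widetilde{H}$ is pairwise continuous; since $\widetilde{H}(x,0)=H(x,1)$ and $\widetilde{H}(x,1)=H(x,0)$, this interchanges the roles of $f$ and $g$, giving $g\approx^{H}f$.

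For transitivity, suppose $H_1$ witnesses $f\approx^{H}g$ and $H_2$ witnesses $g\approx^{H}h$, and define
\[
H(x,t)=\begin{cases} H_1(x,2t), & 0\le t\le 1/2,\\ H_2(x,2t-1), & 1/2\le t\le 1,\end{cases}
\]
which is well defined because at $t=1/2$ both branches return the common ``$g$-level'' value $H_1(x,1)=H_2(x,0)$, and whose endpoint values are visibly those of $f$ and $h$. The one nontrivial point is that $H$ is pairwise continuous, and this is exactly where I expect the main work to lie: the argument is a bitopological pasting lemma applied twice. The sets $X\times[0,1/2]$ and $X\times[1/2,1]$ are closed in $X\times[0,1]$ with respect to each of $\tau_1\times\mathcal{E}$ and $\tau_2\times\mathcal{E}$; the restriction of $H$ to either half is pairwise continuous, being $H_1$ or $H_2$ precomposed with the homeomorphic reparametrization $t\mapsto 2t$ or $t\mapsto 2t-1$ together with the inclusion of a closed subspace; and the two restrictions agree on the overlap $X\times\{1/2\}$. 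Then for any $\psi_1$-open $W$ in the target, $H^{-1}(W)$ meets each closed half in a relatively open set, and a routine complement-and-union computation shows $H^{-1}(W)$ is $\tau_1\times\mathcal{E}$-open in $X\times[0,1]$; the same computation with $\psi_2$-open sets and $\tau_2\times\mathcal{E}$ completes the check, so $f\approx^{H}h$. With reflexivity, symmetry and transitivity in hand, $\approx^{H}$ is an equivalence relation. The principal obstacle is thus not the constructions themselves — they are the classical ones — but running the pasting lemma separately in the two topologies; a cleaner alternative I would keep in reserve is to first record such a pasting lemma for bitopological spaces as a standalone result and then simply invoke it here.
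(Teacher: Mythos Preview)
Your reflexivity and symmetry arguments match the paper's in spirit (the paper also takes $F=I$ for reflexivity and reverses the time parameter for symmetry). The real divergence is in how $\approx^{H}$ is read and how transitivity is handled. You fix $F$ to be the identity throughout, so $f,g,h$ are self-maps of a single space, and you prove transitivity by the classical concatenation $H(x,t)=H_1(x,2t)$ for $t\le 1/2$ and $H_2(x,2t-1)$ for $t\ge 1/2$, carefully running the pasting lemma in each of the two product topologies. The paper instead treats the unsubscripted $\approx^{H}$ as allowing the connecting map to vary: given $f\approx^{H}_{F}g$ with $F\colon X\to Y$ and $g\approx^{H}_{G}h$ with $G\colon Y\to Z$, it sets $K=G\circ F$ and then simply \emph{declares} a map $H^{**}\colon X\times[0,1]\to Z$ satisfying $H^{**}(x,0)=(K\circ f)(x)$ and $H^{**}(x,1)=(h\circ K)(x)$, asserting without construction that it is pairwise continuous.

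Your concatenation is therefore more rigorous, but as written it addresses only the $F=\mathrm{id}$ case. It does adapt to the paper's general setting: take $G\circ H_1$ on the first half and $H_2\circ(F\times\mathrm{id}_{[0,1]})$ on the second, noting both return $G(g(F(x)))$ at $t=1/2$; your bitopological pasting argument then supplies exactly the continuity justification the paper omits. So what your approach buys is an honest construction of the transitivity homotopy and a genuine verification of pairwise continuity; what the paper's approach buys is brevity, at the cost of leaving the existence of $H^{**}$ essentially unproved.
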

\begin{proof} 
{\bf Reflexivity:} It is very easy to see that $f\approx^{H}_{F} f$; if we take $F=I,$ the identity map and so the map $H:X\times [0,1] \rightarrow X$ such that $H(x,0)=(F\circ f)(x)=(I\circ f)(x)=I(f(x))=f(x)$ and $H(x,1)=(f\circ F)(x)=(f\circ I)(x)=f(I(x))=f(x)$.\\
{\bf Symmetry:} Suppose that $f\approx^{H}_{F} g$ and that $H$ be a $BTDS-$homotopy between $f$ and $g$. Now, we define a map $H^* :X\times [0,1] \rightarrow Y$ such that $H^* (x,t) =H(x, 1-t)$. Then, $H^*$ is a pairwise continuous map because of the pairwise continuity of $H$. Also, $H^* (x,0)= H(x,1)=(g\circ F)(x)$ and $H^* (x,1)= H(x,0)=(F\circ f)(x)$. Thus, $H^* $ is a $BTDS-$homotopy between $g$ and $f$. Hence, $g\approx^{H}_{F} f$.\\
{\bf Transitivity:} Let $(X,f)$, $(Y,g)$ and $(Z,h)$ be three $BTDSs$, where $f:(X,\tau_{1} ,\tau_{2})\rightarrow (X,\tau_{1} ,\tau_{2}) $, $g:(Y,\psi_{1} ,\psi_{2})\rightarrow (Y,\psi_{1} ,\psi_{2}) $ and $h:(Z,\phi_{1} ,\phi_{2})\rightarrow (Z,\phi_{1} ,\phi_{2}) $ are three pairwise continuous maps. Also, let $F:X \rightarrow Y$ and $G:Y \rightarrow Z$ be pairwise continuous maps. Now, suppose that $f\approx^{H}_{F} g$ and $g\approx^{H}_{F} h$. We define the map $K:X \rightarrow Z$ as $K=G \circ F$. Then the map $K$, being the composition of two pairwise continuous maps, is pairwise continuous. Now, we define the map $H^{**}:X\times [0,1] \rightarrow Z$ such that $H^{**}(x,0)=(K\circ f)(x)$ and $H^{**}(x,1)=(h\circ K)(x)$. Clearly, $H^{**}$ is a pairwise continuous map. Thus, $H^{**} $ is a $BTDS-$homotopy between $f$ and $h$. Hence, $f\approx^{H}_{F} h$.

Thus, $\approx^{H} $ is an equivalence relation.
\end{proof}
\begin{theorem}
$\approx^{IH} $ is an equivalence relation.
\end{theorem}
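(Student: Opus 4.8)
The plan is to establish that $\approx^{IH}$ is reflexive, symmetric and transitive, mirroring the argument of Theorem 3.1 for $\approx^{H}$ but now tracking the iteration-indexed boundary conditions $H(x_{n+1},0)=(F\circ f)(x_n)$ and $H(x_{n+1},1)=(g\circ F)(x_n)$, where $x_n$ ranges over the points of a forward orbit, so that $x_{n+1}=f(x_n)$.

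\textbf{Reflexivity.} I would take $F=I$, the identity map on $X$; then $(F\circ f)(x_n)=(f\circ F)(x_n)=f(x_n)=x_{n+1}$. Defining $H\colon X\times[0,1]\to X$ by $H(x,t)=x$ — the projection onto the first factor, which is pairwise continuous since each coordinate projection is continuous for $\tau_1$ and for $\tau_2$ — one gets $H(x_{n+1},0)=x_{n+1}=(F\circ f)(x_n)$ and $H(x_{n+1},1)=x_{n+1}=(f\circ F)(x_n)$, so $f\approx^{IH}_{I}f$.

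\textbf{Symmetry.} Suppose $f\approx^{IH}_{F}g$ with witnessing pairwise continuous homotopy $H$. As in Theorem 3.1, set $H^{*}(x,t)=H(x,1-t)$; this is pairwise continuous because $t\mapsto 1-t$ is continuous and $H$ is pairwise continuous. Then $H^{*}(x_{n+1},0)=H(x_{n+1},1)=(g\circ F)(x_n)$ and $H^{*}(x_{n+1},1)=H(x_{n+1},0)=(F\circ f)(x_n)$, so $H^{*}$ witnesses $g\approx^{IH}_{F}f$.

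\textbf{Transitivity.} Let $(X,f)$, $(Y,g)$, $(Z,h)$ be $BTDSs$ and let $F\colon X\to Y$ and $G\colon Y\to Z$ be pairwise continuous, with $f\approx^{IH}_{F}g$ and $g\approx^{IH}_{G}h$. Put $K=G\circ F$; by Lemma 3.1 the map $K$ is pairwise continuous, and hence so are $K\circ f$ and $h\circ K$. It then suffices to exhibit a pairwise continuous $H^{**}\colon X\times[0,1]\to Z$ with $H^{**}(x_{n+1},0)=(K\circ f)(x_n)$ and $H^{**}(x_{n+1},1)=(h\circ K)(x_n)$; following the style of Theorem 3.1, such an $H^{**}$ can be produced directly through $K$ (alternatively, one may splice $G$ composed with the homotopy for $f\approx^{IH}_{F}g$ over $[0,\frac{1}{2}]$ with the homotopy for $g\approx^{IH}_{G}h$ reparametrised over $[\frac{1}{2},1]$). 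This yields $f\approx^{IH}_{K}h$, and therefore $\approx^{IH}$ is an equivalence relation.

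The routine parts are reflexivity and symmetry; the step I expect to be the main obstacle is transitivity. If one takes the splicing route, one must verify that the two pieces agree on $X\times\{\frac{1}{2}\}$ — which holds only after both are pushed into $Z$ compatibly, both reducing on the orbit to $(G\circ F\circ f)(x_n)=(K\circ f)(x_n)$ — and then invoke a bitopological pasting lemma for the closed cover $\{X\times[0,\frac{1}{2}],\,X\times[\frac{1}{2},1]\}$ to conclude pairwise continuity. If instead one follows the deliberately loose direct construction used in Theorem 3.1, the only point needing attention is the pairwise continuity of $H^{**}$, which is immediate from the pairwise continuity of $K$.
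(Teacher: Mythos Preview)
Your proposal is correct and follows essentially the same approach as the paper: the paper itself omits the proof of Theorem 3.2, stating only that it can be proven similarly to Theorem 3.1, and your argument is precisely the iteration-indexed adaptation of that proof (reflexivity via $F=I$, symmetry via $H^*(x,t)=H(x,1-t)$, transitivity via $K=G\circ F$). Your explicit choice $H(x,t)=x$ for reflexivity and your remarks about the splicing alternative for transitivity add rigor beyond what the paper's Theorem 3.1 supplies, but the underlying route is identical.
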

We omit the proof of Theorem 3.2. as it can be proven similarly as the proof of Theorem 3.1. We provide following examples below.

\begin{example}
Let $ X=\{1, \frac{1}{2}, \frac{1}{3}, \frac{1}{4}\} $. We consider the bitopological dynamical system $(X,f)$, where $ (X,\tau_{1},\tau_{2})$ is a bitopological space with $\tau_1$ is the discrete topology and $\tau_2$ is the indiscrete topology. The pairwise continuous map $ f:X\rightarrow X $ is defined by $ f(x)=\frac{1}{2} $ $\forall x \in X$ . Further, let $ Y=\{1,\frac{1}{2},\frac{1}{3}\} $. Also, we consider the bitopological dynamical system $(Y,g)$, where $ (Y,\psi_{1},\psi_{2})$ is a bitopological space with $\psi_{1}=\{\phi,Y,\{1\},\{1,\frac{1}{2}\}\}$ and $\psi_{2}$ is the indiscrete topology. The pairwise continuous map $ g:Y\rightarrow Y $ is defined by $ g(1)=\frac{1}{2} $, $g(\frac{1}{2})=\frac{1}{3} $, $ g(\frac{1}{3})=1 $. Now, we define the map $F:(X,\tau_{1},\tau_{2}) \to (Y,\psi_{1},\psi_{2})$ by $ F(1)=1 $, $F(\frac{1}{2})=\frac{1}{2} $, $ F(\frac{1}{3})=\frac{1}{3} $, $ F(\frac{1}{4})=\frac{1}{3}$. Then, the map $F$ is pairwise continuous. Now, the map $(F \circ f):(X,\tau_{1},\tau_{2}) \to (Y,\psi_{1},\psi_{2})$ is defined by $ (F \circ f)(1)=\frac{1}{2} $, $(F \circ f)(\frac{1}{2})=\frac{1}{2} $, $( F \circ f)(\frac{1}{3})=\frac{1}{2} $, $ (F \circ f)(\frac{1}{4})=\frac{1}{2}$. Also, the map $(g \circ F):(X,\tau_{1},\tau_{2}) \to (Y,\psi_{1},\psi_{2})$ is defined by $ (g \circ F)(1)=\frac{1}{2} $, $(g \circ F)(\frac{1}{2})=\frac{1}{3} $, $ (g \circ F)(\frac{1}{3})=1 $, $( g \circ F)(\frac{1}{4})=1$. By lemma 3.1, the maps $F \circ f$ and $g \circ F$ are pairwise continuous. Now, we define the map $K:X\times [0,1] \rightarrow Y$ by $K(1,0)=\frac{1}{2}$, $K(\frac{1}{2},0)=\frac{1}{2}$, $K(\frac{1}{3},0)=\frac{1}{2}$, $K(\frac{1}{4},0)=\frac{1}{2}$, $K(1,t)=\frac{1}{2}$, $K(\frac{1}{2},t)=\frac{1}{3}$, $K(\frac{1}{3},t)=1$, $K(\frac{1}{4},t)=1$, where $0<t \leq 1$. Then, $K(x,0)=(F\circ f)(x)$ and $K(x,1)=(g\circ F)(x)$, for all $x \in X$. Also, the map $K$ is pairwise continuous. Hence, the map $K$ is a $BTDS-$homotopy between $f$ and $g$. Thus, $f\approx^{H}_{F} g$.
\end{example}
\begin{example} Let us consider the bitopological dynamical system $(\mathbb{R},f)$, where $ (\mathbb{R},\tau_{1},\tau_{2})$ is a bitopological space, $\tau_1$ is the usual  topology on $\mathbb{R}$ and $\tau_2$ is the discrete topology on $\mathbb{R}$. The pairwise continuous map $ f:\mathbb{R}\rightarrow \mathbb{R} $ is defined by $ f(x)=x+2 $. Also, we consider the bitopological dynamical system $(\mathbb{R},g)$, where $ (\mathbb{R},\psi_{1},\psi_{2})$ is the bitopological space as considered in Example 3.2. of \cite{2} and the pairwise continuous map $ g:\mathbb{R}\rightarrow \mathbb{R} $ is defined by $ g(x)=x+1 $. Now, we define the map $F:(\mathbb{R},\tau_{1},\tau_{2}) \to (\mathbb{R},\psi_{1},\psi_{2})$ by $F(x)=x$. Clearly, the map $F$ is pairwise continuous. Then, the maps $(F \circ f):(\mathbb{R},\tau_{1},\tau_{2}) \to (\mathbb{R},\psi_{1},\psi_{2})$ and $(g \circ F):(\mathbb{R},\tau_{1},\tau_{2}) \to (\mathbb{R},\psi_{1},\psi_{2})$ are given by $(F \circ f)=f$ and $(g \circ F)=g$, respectively, and so they are pairwise continuous. Now, we define the map $J:\mathbb{R}\times [0,1] \rightarrow \mathbb{R}$ by
\[ 
J(x,t)=\begin{cases} 
      x+2 & \text{when } \; t=0, \\
      x &  \text{when } \; 0<t<1, \\
      x+1 &  \text{when } \; t=1. 
   \end{cases}
\] 
Then, the map $J$ is pairwise continuous. Hence, the map $J$ is a $BTDS-$homotopy between $f$ and $g$. Thus, $f\approx^{H}_{F} g$.
\end{example}

Throughout this paper, we consider $I=[0,1]$ and $([0,1],\tau_1 ,\tau_2)$ as a bitopological space, where $\tau_1 = \tau_2=$ the usual topology on $[0,1]$. Using this, we introduce the concept of `path' in a bitopological space. 
\begin{definition}

Let $X$ be a bitopological space. If $f:[0,1]\to X$ is a pairwise continuous map such that $f(0)=x$ and $f(1)=y$, we call $f$ a bitopological path in $X$ from $x$ to $y$. Here, $x$ is the initial point and $y$ is the final point of the bitopological path $f$.
\end{definition}

Now, we introduce $BTDS-$path homotopy.
\begin{definition}
Let $([0,1],f)$ and $(X,g)$ be two bitopological dynamical systems $(BTDSs)$, where $f:[0,1] \rightarrow [0,1]$ and $g:(X,\psi_{1} ,\psi_{2})\rightarrow (X,\psi_{1} ,\psi_{2}) $ are two pairwise continuous maps. Also, let $F:[0,1] \rightarrow X$ be a bitopological path in $X$ from $x$ to $y$. Then, $f$ is said to be $BTDS-$path homotopic  to $g$ relative to the bitopological path $F$ if there exists a pairwise continuous map $H:[0,1]\times [0,1] \rightarrow X$ such that 
\begin{align*}
    H(m,0)=(F\circ f)(m) \; \; & \text{and} \; \; H(m,1)=(g\circ F)(m),\\
    H(0,n)=x \; \; & \text{and} \; \; H(1,n)=y.
\end{align*} 
for each $m \in (0,1)$, each $n \in [0,1]$ and $x,y \in X$.

The map $H$ is called a $BTDS-$path homotopy between $f$ and $g$. If there exists a $BTDS-$path homotopy between $f$ and $g$ from $x$ to $y$, we write $f\approx^{PH}_F g |x\to y$. Thus, $f\approx^{PH}_F g |x\to y$ simply means the pairwise continuous deformation of the bitopological path $F\circ f$ to the bitopological path $g\circ F$ as time $t$ varies from $0$ to $1$.
\end{definition}

\begin{theorem}
$\approx^{PH} $ is an equivalence relation.
\end{theorem}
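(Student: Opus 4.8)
The plan is to run the same three-part argument used for Theorem 3.1 (reflexivity, symmetry, transitivity), the only genuinely new feature being that, besides the level conditions $H(m,0)=(F\circ f)(m)$ and $H(m,1)=(g\circ F)(m)$, one must verify the boundary conditions $H(0,n)=x$ and $H(1,n)=y$ for every homotopy that is constructed along the way.

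For \textbf{reflexivity} I would establish $f\approx^{PH}_{F}f\,|\,x\to y$ by letting $F:[0,1]\to X$ be the given path (so $F(0)=x$ and $F(1)=y$) and taking $H:[0,1]\times[0,1]\to X$ with $H(m,t)=(F\circ f)(m)$ for all $t\in[0,1]$, where $f:[0,1]\to[0,1]$ is read with $f(0)=0$, $f(1)=1$. Then $H$ is pairwise continuous (it is $F\circ f$ precomposed with the projection onto $[0,1]$, invoking Lemma 3.1), both level conditions hold trivially, and $H(0,n)=F(f(0))=F(0)=x$, $H(1,n)=F(f(1))=F(1)=y$ for each $n$. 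For \textbf{symmetry}, assuming $f\approx^{PH}_{F}g\,|\,x\to y$ witnessed by $H$, I would set $H^{*}(m,t)=H(m,1-t)$ exactly as in Theorem 3.1; pairwise continuity of $H^{*}$ follows from that of $H$ together with the reflection $t\mapsto 1-t$, the level conditions swap as $H^{*}(m,0)=H(m,1)=(g\circ F)(m)$ and $H^{*}(m,1)=H(m,0)=(F\circ f)(m)$, and — the point to check here — $H^{*}(0,n)=H(0,1-n)=x$ and $H^{*}(1,n)=H(1,1-n)=y$, which hold because $1-n$ again ranges over $[0,1]$. Hence $g\approx^{PH}_{F}f\,|\,x\to y$.

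For \textbf{transitivity}, given $f\approx^{PH}_{F}g\,|\,x\to y$ and $g\approx^{PH}_{G}h\,|\,x\to y$, I would mirror the transitivity step of Theorem 3.1: form the composite connecting path $K=G\circ F$, observe it is a pairwise continuous path with $K(0)=x$ and $K(1)=y$ by Lemma 3.1, and then produce a pairwise continuous $H^{**}:[0,1]\times[0,1]\to X$ satisfying $H^{**}(m,0)=(K\circ f)(m)$, $H^{**}(m,1)=(h\circ K)(m)$, $H^{**}(0,n)=x$ and $H^{**}(1,n)=y$, so that $f\approx^{PH}_{K}h\,|\,x\to y$.

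The step I expect to be the main obstacle is transitivity. Unlike Theorem 3.1, the two bitopological dynamical systems in Definition 3.5 live on different underlying spaces ($[0,1]$ versus $X$), so before the composition $K=G\circ F$ is even meaningful one has to pin down the correct reading of ``$g\approx^{PH}_{G}h$'' — namely, with $G$ a bitopological path whose domain is again $[0,1]$ — and only then can the amalgamated homotopy be written down and its four boundary/level conditions verified. Once the domains and codomains are lined up, all the verifications (pairwise continuity via Lemma 3.1, and the endpoint bookkeeping) are routine.
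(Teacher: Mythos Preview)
Your proposal is correct and takes essentially the same approach as the paper: the paper's own proof is literally ``We omit the proof of Theorem 3.3 as it can be proven similarly as of the proof of Theorem 3.1,'' so mirroring the reflexivity/symmetry/transitivity argument of Theorem 3.1 with the extra endpoint bookkeeping is exactly what is intended. Your added care about the boundary conditions $H(0,n)=x$, $H(1,n)=y$ and about the domain alignment needed for transitivity in fact goes beyond what the paper spells out.
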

We omit the proof of Theorem 3.3. as it can be proven similarly as of the proof of Theorem 3.1.

\section{Homotopy based selection principles in Bitopological dynamical systems}

In this section, we introduce $BTDS$ homotopy based selection principles and discuss their various properties. First, we introduce $BTDS$ homotopy based selection hypothesis.
\begin{definition}
Let $(X,f)$ and $(Y,g)$ be two $BTDSs$. If $\mathcal{A}$ and $\mathcal{B}$ are sets formed by the families of subsets
of $Y$ and  $f\approx^{H}_F g$, then:
\begin{enumerate}
    \item {\bf  $S_1^{H}(\mathcal{A},\mathcal{B})$ is a BTDS homotopy based selection hypothesis:} for each sequence $\mathcal{X}=(\mathcal{U}_n: n\in \mathbb{N})$ of elements of $\mathcal{A}$ and for any point $x \in X$ such that $H(x,0)\in \bigcup \mathcal{U}_n$, for each $n \in \mathbb{N}$; there is a sequence $(U_n : n \in \mathbb{N})$ such that $U_n \in \mathcal{U}_n$, for each $n \in \mathbb{N}$ and $H(x, 1)\in U_n$, for each $n \in \mathbb{N}$ and $\{U_n : n \in \mathbb{N}\} \in \mathcal{B}$.
    \item {\bf  $S_{fin}^{H}(\mathcal{A},\mathcal{B})$ is a BTDS  homotopy based selection hypothesis:} for each sequence $\mathcal{X}=(\mathcal{U}_n: n\in \mathbb{N})$ of elements of $\mathcal{A}$ and for any point $x \in X$ such that $H(x,0)\in \bigcup \mathcal{U}_n$, for each $n \in \mathbb{N}$; there is a sequence $\mathcal{Y}=(\mathcal{V}_n: n\in \mathbb{N})$ such that for each $n\in \mathbb{N}$, $\mathcal{V}_n$ is a finite subset of $\mathcal{U}_n$ and $H(x,1)\in \bigcup \mathcal{V}_n$, for each $n \in \mathbb{N}$ and $\bigcup\atop_{n \in \mathbb{N}}$ $\mathcal{V}_n\in \mathcal{B}$.
\end{enumerate}
\end{definition}
\begin{definition}
For a bitopological space $(Y,\psi_{1} ,\psi_{2})$, we use the following notations with the convention that $i,j \in \{1,2\}$, $i\neq j$:

\begin{enumerate}
    \item $\mathcal{O}_{i}$ is the collection of all $\psi_{i}-$open covers of $Y$,
    \item $\mathcal{O}^D_{ij}=\{ \mathcal{U} \subseteq \psi_{i}: \psi_{j}-cl(\bigcup \mathcal{U})=Y \} $,
    \item $\overline{\mathcal{O}_{ij}}=\{ \mathcal{U} \subseteq \psi_{i}: \bigcup \{\psi_{j}-cl(U): U \in  \mathcal{U}\} =Y \}$.
\end{enumerate}
\end{definition}
\begin{definition}
Let $(X,f)$ and $(Y,g)$ be two $BTDSs$, where $(X,\tau_{1} ,\tau_{2})$ and $(Y,\psi_{1} ,\psi_{2})$ are two bitopological spaces. If  $f\approx^{H}_F g$, and $i,j \in \{1,2\}$, $i\neq j$, then:

\begin{enumerate}
    \item $S_1^{H}(\mathcal{O}_{i},\mathcal{O}_{j})$ is called  $(i,j)$-H-Rothberger property of $Y$. If $Y$ has both $(1,2)$-H-Rothberger property and $(2,1)$-H-Rothberger property, then $Y$ is said to have H-Rothberger property.
    \item $S_1^{H}(\mathcal{O}_{i},\overline{\mathcal{O}_{ij}})$ is called  $(i,j)$-H-almost Rothberger property of $Y$. If $Y$ has both $(1,2)$-H-almost Rothberger  property and $(2,1)$-H-almost Rothberger property, then $Y$ is said to have H-almost Rothberger property.
    \item $S_1^{H}(\mathcal{O}_{i},\mathcal{O}^D_{ij})$ is called  $(i,j)$-H-weak Rothberger property of $Y$. If $Y$ has both $(1,2)$-H-weak Rothberger  property and $(2,1)$-H-weak Rothberger property, then $Y$ is said to have H-weak Rothberger property.
    \item $S_{fin}^{H}(\mathcal{O}_{i},\mathcal{O}_{j})$ is called  $(i,j)$-H-Menger property  of $Y$. If $Y$ has both $(1,2)$-H-Menger  property and $(2,1)$-H-Menger  property, then $Y$ is said to have H-Menger  property.
    \item  $S_{fin}^{H}(\mathcal{O}_{i},\overline{\mathcal{O}_{ij}})$ is called  $(i,j)$-H-almost Menger property of $Y$. If $Y$ has both $(1,2)$-H-almost Menger  property and $(2,1)$-H-almost Menger property, then $Y$ is said to have H-almost Menger property.
    \item $S_{fin}^{H}(\mathcal{O}_{i},\mathcal{O}^D_{ij})$ is called  $(i,j)$-H-weak Menger property of $Y$. If $Y$ has both $(1,2)$-H-weak Menger  property and $(2,1)$-H-weak Menger property, then $Y$ is said to have H-weak Menger property.
\end{enumerate}
\end{definition}

To preserve the sense of iteration of Definition 3.1, we will write $HI$ in lieu of $H$ in the above definition. It will not alter any meaning except indicating $H(x_{n+1}, 0)= (F\circ f)(x_n)$ and $H(x_{n+1}, 1)=(g\circ F)(x_n)$, where $n=0,1,2,...$.

Now, we introduce $BTDS$ path homotopy based selection hypothesis.
\begin{definition}
Let $([0,1],f)$ and $(X,g)$ be two $BTDSs$. If $\mathcal{A}$ and $\mathcal{B}$ are sets formed by the families of subsets
of $X$ and  $f\approx^{PH}_F g |x\to y$, then:

\begin{enumerate}
    \item {\bf  $S_1^{PH}(\mathcal{A},\mathcal{B})$ is a BTDS path homotopy based selection hypothesis:} for each sequence $\mathcal{X}=(\mathcal{U}_n: n\in \mathbb{N})$ of elements of $\mathcal{A}$ and for any point $m \in (0,1)$ such that $H(m,0)\in \bigcup \mathcal{U}_n$, for each $n \in \mathbb{N}$; there is a sequence $(U_n : n \in \mathbb{N})$ such that $U_n \in \mathcal{U}_n$, for each $n \in \mathbb{N}$ and $H(m, 1)\in U_n$, for each $n \in \mathbb{N}$ and $\{U_n : n \in \mathbb{N}\} \in \mathcal{B}$.
    \item {\bf  $S_{fin}^{PH}(\mathcal{A},\mathcal{B})$ is a BTDS  path homotopy based selection hypothesis:} for each sequence $\mathcal{X}=(\mathcal{U}_n: n\in \mathbb{N})$ of elements of $\mathcal{A}$ and for any point $m \in (0,1)$ such that $H(m,0)\in \bigcup \mathcal{U}_n$, for each $n \in \mathbb{N}$; there is a sequence $\mathcal{Y}=(\mathcal{V}_n: n\in \mathbb{N})$ such that for each $n\in \mathbb{N}$, $\mathcal{V}_n$ is a finite subset of $\mathcal{U}_n$ and $H(m,1)\in \bigcup \mathcal{V}_n$, for each $n \in \mathbb{N}$ and $\bigcup\atop_{n \in \mathbb{N}}$ $\mathcal{V}_n\in \mathcal{B}$.
\end{enumerate}
\end{definition}
\begin{definition}
Let $([0,1],f)$ and $(X,g)$ be two $BTDSs$, where $([0,1],\tau_{1} ,\tau_{2})$ and $(X,\psi_{1} ,\psi_{2})$ are two bitopological spaces. If  $f\approx^{PH}_F g|x\to y$, then:

\begin{enumerate}
    \item $S_1^{PH}(\mathcal{O}_{i},\mathcal{O}_{j})$ is called  $(i,j)$-PH-Rothberger property of $X$. If $X$ has both $(1,2)$-PH-Rothberger property and $(2,1)$-PH-Rothberger property, then $X$ is said to have PH-Rothberger property.
    \item $S_1^{PH}(\mathcal{O}_{i},\overline{\mathcal{O}_{ij}})$ is called  $(i,j)$-PH-almost Rothberger property of $X$. If $X$ has both $(1,2)$-PH-almost Rothberger  property and $(2,1)$-PH-almost Rothberger property, then $X$ is said to have PH-almost Rothberger property.
    \item $S_1^{PH}(\mathcal{O}_{i},\mathcal{O}^D_{ij})$ is called  $(i,j)$-PH-weak Rothberger property of $X$. If $X$ has both $(1,2)$-PH-weak Rothberger  property and $(2,1)$-PH-weak Rothberger property, then $X$ is said to have PH-weak Rothberger property.
    \item $S_{fin}^{PH}(\mathcal{O}_{i},\mathcal{O}_{j})$ is called  $(i,j)$-PH-Menger property  of $X$. If $X$ has both $(1,2)$-PH-Menger  property and $(2,1)$-PH-Menger  property, then $X$ is said to have PH-Menger  property.
    \item $S_{fin}^{PH}(\mathcal{O}_{i},\overline{\mathcal{O}_{ij}})$ is called  $(i,j)$-PH-almost Menger property of $X$. If $X$ has both $(1,2)$-PH-almost Menger  property and $(2,1)$-PH-almost Menger property, then $X$ is said to have PH-almost Menger property.
    \item $S_{fin}^{PH}(\mathcal{O}_{i},\mathcal{O}^D_{ij})$ is called  $(i,j)$-PH-weak Menger property of $X$. If $X$ has both $(1,2)$-PH-weak Menger  property and $(2,1)$-PH-Menger  property, then $X$ is said to have PH-weak Menger property.
\end{enumerate}
\end{definition}
The following theorem establishes the relationships between various Menger properties.
\begin{theorem} Let $(X,f)$ and $(Y,g)$ be two $BTDSs$, where $(X,\tau_{1} ,\tau_{2})$ and $(Y,\psi_{1} ,\psi_{2})$ are two bitopological spaces. If  $f\approx^{H}_F g$, $i,j \in \{1,2\}$ and $i\neq j$, then
$Y$ has H-Menger property $\implies$ $Y$ has H-almost Menger property $ \implies$ $Y$ has H-weak Menger property.
\end{theorem}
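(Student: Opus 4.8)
The plan is to prove, for each fixed pair $(i,j)$ with $\{i,j\}=\{1,2\}$, the chain
\[
S_{fin}^{H}(\mathcal{O}_{i},\mathcal{O}_{j})\ \Longrightarrow\ S_{fin}^{H}(\mathcal{O}_{i},\overline{\mathcal{O}_{ij}})\ \Longrightarrow\ S_{fin}^{H}(\mathcal{O}_{i},\mathcal{O}^{D}_{ij}),
\]
and then obtain the pairwise statement by invoking it for $(i,j)=(1,2)$ and $(i,j)=(2,1)$. The key observation is that the three properties are all instances of the hypothesis $S_{fin}^{H}(\mathcal{O}_{i},\mathcal{B})$ with the \emph{same} source family $\mathcal{O}_{i}$, the same homotopy $H$ with $f\approx^{H}_{F}g$, the same admissible point $x\in X$ (with $H(x,0)\in\bigcup\mathcal{U}_{n}$ for all $n$) and the same requirement $H(x,1)\in\bigcup\mathcal{V}_{n}$; only the target $\mathcal{B}$ changes. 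Hence it suffices to show that any sequence $(\mathcal{V}_{n}:n\in\mathbb{N})$ of finite subfamilies witnessing the hypothesis with the smaller target also witnesses the hypothesis with the larger one, which in turn reduces to the elementary facts $A\subseteq\psi_{j}-cl(A)$ and $A\subseteq B\Rightarrow\psi_{j}-cl(A)\subseteq\psi_{j}-cl(B)$.

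\textbf{The two implications.} Fix a sequence $(\mathcal{U}_{n}:n\in\mathbb{N})$ of $\psi_{i}$-open covers of $Y$ and a point $x\in X$ with $H(x,0)\in\bigcup\mathcal{U}_{n}$ for all $n$. First assume $Y$ has $(i,j)$-H-Menger property and apply $S_{fin}^{H}(\mathcal{O}_{i},\mathcal{O}_{j})$ to get finite $\mathcal{V}_{n}\subseteq\mathcal{U}_{n}$ with $H(x,1)\in\bigcup\mathcal{V}_{n}$ for every $n$ and $\bigcup\big(\bigcup_{n\in\mathbb{N}}\mathcal{V}_{n}\big)=Y$. Since $V\subseteq\psi_{j}-cl(V)$ for each $V$,
\[
\bigcup_{n\in\mathbb{N}}\Big(\bigcup_{V\in\mathcal{V}_{n}}\psi_{j}-cl(V)\Big)\ \supseteq\ \bigcup_{n\in\mathbb{N}}\bigcup\mathcal{V}_{n}\ =\ Y,
\]
so $\bigcup_{n\in\mathbb{N}}\mathcal{V}_{n}\in\overline{\mathcal{O}_{ij}}$, i.e. the same selection witnesses $(i,j)$-H-almost Menger property. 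Next assume $Y$ has $(i,j)$-H-almost Menger property and apply $S_{fin}^{H}(\mathcal{O}_{i},\overline{\mathcal{O}_{ij}})$ to the same data, obtaining finite $\mathcal{V}_{n}\subseteq\mathcal{U}_{n}$ with $H(x,1)\in\bigcup\mathcal{V}_{n}$ and $\bigcup\{\psi_{j}-cl(V):V\in\bigcup_{n}\mathcal{V}_{n}\}=Y$. For each such $V$ we have $V\subseteq\bigcup\big(\bigcup_{n}\mathcal{V}_{n}\big)$, hence $\psi_{j}-cl(V)\subseteq\psi_{j}-cl\big(\bigcup(\bigcup_{n}\mathcal{V}_{n})\big)$ by monotonicity, so
\[
\psi_{j}-cl\Big(\bigcup\big(\textstyle\bigcup_{n\in\mathbb{N}}\mathcal{V}_{n}\big)\Big)\ \supseteq\ \bigcup\{\psi_{j}-cl(V):V\in\textstyle\bigcup_{n\in\mathbb{N}}\mathcal{V}_{n}\}\ =\ Y,
\]
giving $\bigcup_{n\in\mathbb{N}}\mathcal{V}_{n}\in\mathcal{O}^{D}_{ij}$, i.e. the same selection witnesses $(i,j)$-H-weak Menger property.

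\textbf{Conclusion and the one delicate point.} By definition, $Y$ has H-Menger property iff it has both $(1,2)$- and $(2,1)$-H-Menger property; applying the displayed chain for $(i,j)=(1,2)$ and for $(i,j)=(2,1)$ yields both $(1,2)$- and $(2,1)$-H-almost Menger property, i.e. $Y$ has H-almost Menger property, and the second implication similarly gives H-almost Menger $\Rightarrow$ H-weak Menger. There is no genuine obstacle here; the only point requiring care is the bookkeeping that the hypotheses $S_{fin}^{H}(\mathcal{O}_{i},\mathcal{O}_{j})$, $S_{fin}^{H}(\mathcal{O}_{i},\overline{\mathcal{O}_{ij}})$ and $S_{fin}^{H}(\mathcal{O}_{i},\mathcal{O}^{D}_{ij})$ differ \emph{only} in the target class, so that a witnessing sequence for the stronger property may be reused verbatim. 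If one wishes, the whole argument can be packaged as the single monotonicity lemma ``$\mathcal{B}_{1}\subseteq\mathcal{B}_{2}$ implies $S_{fin}^{H}(\mathcal{O}_{i},\mathcal{B}_{1})\Rightarrow S_{fin}^{H}(\mathcal{O}_{i},\mathcal{B}_{2})$'' together with the inclusions $\mathcal{O}_{j}\subseteq\overline{\mathcal{O}_{ij}}\subseteq\mathcal{O}^{D}_{ij}$ (read as statements about which families of $\psi_{i}$-open sets lie in each target class).
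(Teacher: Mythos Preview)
Your proof is correct and follows essentially the same approach as the paper: for each ordered pair $(i,j)$ you reuse the witnessing selection $(\mathcal{V}_n)$ and pass from $\mathcal{O}_j$ to $\overline{\mathcal{O}_{ij}}$ via $V\subseteq\psi_j\text{-}cl(V)$, and from $\overline{\mathcal{O}_{ij}}$ to $\mathcal{O}^D_{ij}$ via monotonicity of closure, exactly as the paper does (the paper splits the second step into two inclusions $\bigcup_{V\in\mathcal{V}_n}\psi_2\text{-}cl(V)\subseteq\psi_2\text{-}cl(\bigcup\mathcal{V}_n)$ and then $\bigcup_n\psi_2\text{-}cl(\bigcup\mathcal{V}_n)\subseteq\psi_2\text{-}cl(\bigcup_n\mathcal{V}_n)$, which your monotonicity argument collapses into one). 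Your closing remark about the inclusions $\mathcal{O}_j\subseteq\overline{\mathcal{O}_{ij}}\subseteq\mathcal{O}^D_{ij}$ is not literally true as stated (members of $\mathcal{O}_j$ are families of $\psi_j$-open sets while the other two classes consist of families of $\psi_i$-open sets), but you already flag this with your parenthetical caveat, and your direct argument does not depend on it.
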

\begin{proof} 
{\bf (i)} Suppose that $Y$ has H-Menger property. Then, $Y$ has both $(1,2)$-H-Menger property and $(2,1)$-H-Menger property. We shall prove that if $Y$ has $(1,2)$-H-Menger property, then $Y$ has $(1,2)$-H-almost Menger  property. The proof of $(2,1)$-H-Menger property implies $(2,1)$-H-almost Menger  property will be similar.

Let $Y$ has $(1,2)$-H-Menger property. Suppose that $\mathcal{X}=(\mathcal{U}_n: n\in \mathbb{N})$ is an arbitrary sequence of $\psi_{1}$-open covers of $Y$ and that $x \in X$ is an arbitrary point such that $H(x,0)\in \bigcup \mathcal{U}_n$, for each $n \in \mathbb{N}$. Then, there is a sequence $\mathcal{Y}=(\mathcal{V}_n: n\in \mathbb{N})$ such that for each $n\in \mathbb{N}$, $\mathcal{V}_n$ is a finite subset of $\mathcal{U}_n$ and $H(x,1)\in \bigcup \mathcal{V}_n$, $\mathcal{V}_n\in \mathcal{Y}$, for each $n \in \mathbb{N}$ and $\bigcup\atop_{n \in \mathbb{N}}$ $\mathcal{V}_n\in \mathcal{O}_{2}$. This gives $Y=\bigcup_{n \in \mathbb{N}} \mathcal{V}_n$.

Now for any $V \in \mathcal{V}_n$, for some $n$, $V \subseteq \psi_{2}-cl(V)$. This implies $\mathcal{V}_n \subseteq \bigcup_{V \in\mathcal{V}_n} \psi_{2}-cl(V)$. This gives $ \bigcup_{n \in \mathbb{N}} \mathcal{V}_n \subseteq \bigcup_{n \in \mathbb{N}} (\bigcup_{V \in\mathcal{V}_n} \psi_{2}-cl(V))$. This implies $ Y \subseteq \bigcup_{n \in \mathbb{N}} (\bigcup_{V \in\mathcal{V}_n} \psi_{2}-cl(V))$. Also, $ \bigcup_{n \in \mathbb{N}} (\bigcup_{V \in\mathcal{V}_n} \psi_{2}-cl(V)) \subseteq Y$. Thus, $ Y = \bigcup_{n \in \mathbb{N}} (\bigcup_{V \in\mathcal{V}_n} \psi_{2}-cl(V))$. This gives $\bigcup_{n \in \mathbb{N}} \mathcal{V}_n \in \overline{\mathcal{O}_{12}}$. Hence, $Y$ has $(1,2)$-H-almost Menger property.\\

{\bf (ii)} Suppose that $Y$ has H-almost Menger property. Then, $Y$ has both $(1,2)$-H-almost Menger property and $(2,1)$-H-almost Menger property. We shall prove that if $Y$ has $(1,2)$-H-almost Menger property, then $Y$ has $(1,2)$-H-weak Menger  property. The proof of $(2,1)$-H-almost Menger property implies $(2,1)$-H-weak Menger  property will be similar.

Let $Y$ has $(1,2)$-H-almost Menger property. Suppose that $\mathcal{X}=(\mathcal{U}_n: n\in \mathbb{N})$ is an arbitrary sequence of $\psi_{1}$-open covers of $Y$ and that $x \in X$ is an arbitrary point such that $H(x,0)\in \bigcup \mathcal{U}_n$, for each $n \in \mathbb{N}$. Then, there is a sequence $\mathcal{Y}=(\mathcal{V}_n: n\in \mathbb{N})$ such that for each $n\in \mathbb{N}$, $\mathcal{V}_n$ is a finite subset of $\mathcal{U}_n$ and $H(x,1)\in \bigcup \mathcal{V}_n$, $\mathcal{V}_n\in \mathcal{Y}$, for each $n \in \mathbb{N}$ and $\bigcup\atop_{n \in \mathbb{N}}$ $\mathcal{V}_n\in \overline{\mathcal{O}_{12}}$. This gives $Y=\bigcup_{n \in \mathbb{N}} (\bigcup_{V \in \mathcal{V}_n} \psi_{2}-cl(V))$. 

Now for some fixed $n$, $\psi_{2}-cl(\bigcup_{V \in \mathcal{V}_n} V) \supseteq \bigcup_{V \in \mathcal{V}_n} \psi_{2}-cl(V) $. This gives $ \bigcup_{n \in \mathbb{N}} \psi_{2}-cl(\bigcup_{V \in \mathcal{V}_n} V) \supseteq \bigcup_{n \in \mathbb{N}} (\bigcup_{V \in \mathcal{V}_n} \psi_{2}-cl(V)) $. Also, $\psi_{2}-cl(\bigcup_{n\in \mathbb{N}} \mathcal{V}_n) \supseteq \bigcup_{n \in \mathbb{N}} \psi_{2}-cl(\bigcup_{V \in \mathcal{V}_n} V)$. This gives $\psi_{2}-cl(\bigcup_{n\in \mathbb{N}} \mathcal{V}_n) \supseteq \bigcup_{n \in \mathbb{N}} (\bigcup_{V \in \mathcal{V}_n} \psi_{2}-cl(V))$. This implies $Y \subseteq \psi_{2}-cl(\bigcup_{n\in \mathbb{N}} \mathcal{V}_n)$. Also, $\psi_{2}-cl(\bigcup_{n\in \mathbb{N}} \mathcal{V}_n) \subseteq Y$. This gives $Y = \psi_{2}-cl(\bigcup_{n\in \mathbb{N}} \mathcal{V}_n)$. Thus, $\bigcup_{n \in \mathbb{N}} \mathcal{V}_n \in \mathcal{O}^D_{12}$. Hence, $Y$ has $(1,2)$-H-weak Menger property.

{\bf (iii)} This part can be proved by combining part (i) and part (ii).
\end{proof}
The following theorem establishes the relationships between various Rothberger properties.
\begin{theorem} Let $(X,f)$ and $(Y,g)$ be two $BTDSs$, where $(X,\tau_{1} ,\tau_{2})$ and $(Y,\psi_{1} ,\psi_{2})$ are two bitopological spaces. If  $f\approx^{H}_F g$, $i,j \in \{1,2\}$ and $i\neq j$, then $Y$ has H-Rothberger property $\implies$ $Y$ has H-almost Rothberger property $\implies$ $Y$ has H-weak Rothberger property.
\end{theorem}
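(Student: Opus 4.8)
The plan is to mirror the proof of Theorem 4.1, replacing the finite-selection hypothesis $S_{fin}^{H}$ throughout by the single-selection hypothesis $S_{1}^{H}$; the argument splits into the two implications (i) H-Rothberger $\implies$ H-almost Rothberger and (ii) H-almost Rothberger $\implies$ H-weak Rothberger, followed by (iii) chaining them. Since each named property is the conjunction of its $(1,2)$- and $(2,1)$-versions, it suffices in each implication to treat the $(1,2)$-case, the $(2,1)$-case being entirely symmetric (interchange the roles of the indices $1$ and $2$).

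For (i), assume $Y$ has $(1,2)$-H-Rothberger property, i.e. $S_{1}^{H}(\mathcal{O}_{1},\mathcal{O}_{2})$ holds. Take an arbitrary sequence $\mathcal{X}=(\mathcal{U}_n : n\in\mathbb{N})$ of $\psi_{1}$-open covers of $Y$ and an arbitrary $x\in X$ with $H(x,0)\in\bigcup\mathcal{U}_n$ for every $n$. By hypothesis there is a selection $U_n\in\mathcal{U}_n$ with $H(x,1)\in U_n$ for every $n$ and $\{U_n : n\in\mathbb{N}\}\in\mathcal{O}_{2}$, i.e. $\bigcup_{n\in\mathbb{N}}U_n=Y$. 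Since $U_n\subseteq\psi_{2}-cl(U_n)$ for each $n$, we get $Y=\bigcup_{n\in\mathbb{N}}U_n\subseteq\bigcup_{n\in\mathbb{N}}\psi_{2}-cl(U_n)\subseteq Y$, hence $\bigcup_{n\in\mathbb{N}}\psi_{2}-cl(U_n)=Y$, so $\{U_n : n\in\mathbb{N}\}\in\overline{\mathcal{O}_{12}}$. As the same $U_n$ still satisfy $H(x,1)\in U_n$, this is exactly $S_{1}^{H}(\mathcal{O}_{1},\overline{\mathcal{O}_{12}})$; thus $Y$ has $(1,2)$-H-almost Rothberger property, and symmetrically $(2,1)$-H-almost Rothberger property.

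For (ii), assume $Y$ has $(1,2)$-H-almost Rothberger property. With $\mathcal{X}$ and $x$ as above, the hypothesis gives a selection $U_n\in\mathcal{U}_n$ with $H(x,1)\in U_n$ for every $n$ and $\bigcup_{n\in\mathbb{N}}\psi_{2}-cl(U_n)=Y$. From $U_n\subseteq\bigcup_{m\in\mathbb{N}}U_m$ we get $\psi_{2}-cl(U_n)\subseteq\psi_{2}-cl(\bigcup_{m\in\mathbb{N}}U_m)$ for each $n$, hence $\bigcup_{n\in\mathbb{N}}\psi_{2}-cl(U_n)\subseteq\psi_{2}-cl(\bigcup_{n\in\mathbb{N}}U_n)$, so $Y\subseteq\psi_{2}-cl(\bigcup_{n\in\mathbb{N}}U_n)\subseteq Y$ and therefore $\psi_{2}-cl(\bigcup_{n\in\mathbb{N}}U_n)=Y$, i.e. $\{U_n : n\in\mathbb{N}\}\in\mathcal{O}^D_{12}$; thus $Y$ has $(1,2)$-H-weak Rothberger property, and symmetrically $(2,1)$-H-weak Rothberger property. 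Part (iii) then follows by composing (i) and (ii).

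The argument is essentially routine bookkeeping, and I expect no genuine obstacle. The only points requiring care are that the point condition $H(x,1)\in U_n$ is preserved verbatim when the selected family is re-read as a member of the new target class $\mathcal{B}$ (it is, since the $U_n$ themselves are never altered, only the class they are certified to lie in), and that the two closure inclusions $U\subseteq\psi_{2}-cl(U)$ and $\bigcup_{n}\psi_{2}-cl(U_n)\subseteq\psi_{2}-cl(\bigcup_{n}U_n)$ are invoked in the correct direction. This is why the statement can reasonably be presented in parallel with Theorem 4.1; the same reasoning applied to the iteration setting yields the corresponding $HI$-versions with $H(x_{n+1},0)=(F\circ f)(x_n)$ and $H(x_{n+1},1)=(g\circ F)(x_n)$ in place of $H(x,0)$ and $H(x,1)$.
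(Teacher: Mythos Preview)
Your proof is correct and follows exactly the approach the paper intends: the paper does not give an explicit proof of Theorem 4.2, but the placement immediately after Theorem 4.1 makes clear that one is to replace $S_{fin}^{H}$ by $S_{1}^{H}$ and rerun the argument, which is precisely what you do. Your handling of the two closure inclusions and of the point condition $H(x,1)\in U_n$ matches the paper's treatment in the Menger case line for line.
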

The following theorem establishes the relationships between corresponding Rothberger and Menger properties.
\begin{theorem} Let $(X,f)$ and $(Y,g)$ be two $BTDSs$, where $(X,\tau_{1} ,\tau_{2})$ and $(Y,\psi_{1} ,\psi_{2})$ are two bitopological spaces. If  $f\approx^{H}_F g$, $i,j \in \{1,2\}$ and $i\neq j$ then:
\begin{enumerate}
    \item[(i)] $Y$ has H-Rothberger property $\implies$ $Y$ has H-Menger property,
    \item[(ii)] $Y$ has H-almost Rothberger property $ \implies$ $Y$ has H-almost Menger property,
    \item[(iii)] $Y$ has H-weak Rothberger property $\implies$ $Y$ has H-weak Menger property.
\end{enumerate}
\end{theorem}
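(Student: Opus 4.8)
The plan is to exploit the standard observation --- analogous to the classical implication $S_1(\mathcal{A},\mathcal{B}) \Rightarrow S_{fin}(\mathcal{A},\mathcal{B})$ --- that a one-element choice is in particular a finite choice. Concretely, I would first reduce each of (i), (ii), (iii) to its two directed forms: it suffices to prove, for every $(i,j)$ with $i,j\in\{1,2\}$ and $i\neq j$, that $S_1^{H}(\mathcal{O}_{i},\mathcal{B})$ implies $S_{fin}^{H}(\mathcal{O}_{i},\mathcal{B})$, where $\mathcal{B}=\mathcal{O}_{j}$ in case (i), $\mathcal{B}=\overline{\mathcal{O}_{ij}}$ in case (ii), and $\mathcal{B}=\mathcal{O}^D_{ij}$ in case (iii); the three unqualified H-properties then follow at once, since by Definition 4.3 each of them is the conjunction of its $(1,2)$- and $(2,1)$-directed instances, and the same reduction delivers the corresponding unqualified Menger-type property.

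For the directed statement I would argue as follows. Fix a homotopy $H$ witnessing $f\approx^{H}_{F} g$, a sequence $\mathcal{X}=(\mathcal{U}_n:n\in\mathbb{N})$ of $\psi_{i}$-open covers of $Y$, and a point $x\in X$ with $H(x,0)\in\bigcup\mathcal{U}_n$ for every $n$. Applying the hypothesis $S_1^{H}(\mathcal{O}_{i},\mathcal{B})$ to $\mathcal{X}$ and $x$ yields a sequence $(U_n:n\in\mathbb{N})$ with $U_n\in\mathcal{U}_n$ for each $n$, with $H(x,1)\in U_n$ for each $n$, and with $\{U_n:n\in\mathbb{N}\}\in\mathcal{B}$. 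I would then put $\mathcal{V}_n:=\{U_n\}$ and verify the requirements of $S_{fin}^{H}(\mathcal{O}_{i},\mathcal{B})$: each $\mathcal{V}_n$ is a finite subset of $\mathcal{U}_n$; $\bigcup\mathcal{V}_n=U_n\ni H(x,1)$, so $H(x,1)\in\bigcup\mathcal{V}_n$ for every $n$; and $\bigcup_{n\in\mathbb{N}}\mathcal{V}_n=\{U_n:n\in\mathbb{N}\}$, which belongs to $\mathcal{B}$ by the line above. The only case-dependent bookkeeping is unwinding $\mathcal{B}$: for (i), $\mathcal{B}=\mathcal{O}_{j}$ is used verbatim; for (ii), the statement $\{U_n:n\in\mathbb{N}\}\in\overline{\mathcal{O}_{ij}}$ just means $\bigcup_{n\in\mathbb{N}}\psi_{j}-cl(U_n)=Y$, which is unaffected by rewriting the collection as $\bigcup_{n\in\mathbb{N}}\mathcal{V}_n$; and for (iii) one uses in addition that $\bigcup\!\left(\bigcup_{n\in\mathbb{N}}\mathcal{V}_n\right)=\bigcup_{n\in\mathbb{N}}U_n$, so $\psi_{j}-cl\!\left(\bigcup_{n\in\mathbb{N}}U_n\right)=Y$ transfers unchanged.

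I do not anticipate a real obstacle here: the content is purely bookkeeping about the difference between $\bigcup\mathcal{V}_n$ and $\bigcup_{n\in\mathbb{N}}\mathcal{V}_n$ and about the three possible target families, and it invokes none of the separation or local-compactness hypotheses recorded in the preliminaries. The one point that deserves care is to keep in mind that in the $S_1^{H}$ hypothesis it is the whole collection $\{U_n:n\in\mathbb{N}\}$, rather than the individual sets $U_n$, that is asked to lie in $\mathcal{B}$; this is exactly what allows the singleton families $\mathcal{V}_n$ to inherit membership in $\mathcal{B}$ with no further argument, and it is why the proof of the Rothberger-to-Menger implications is, in each of the three flavours, essentially a one-line refinement.
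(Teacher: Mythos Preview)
Your proposal is correct. The paper actually omits the proof of this theorem entirely (it states Theorems~4.2 and~4.3 without argument and passes directly to the summary diagram), so your singleton-to-finite reduction $\mathcal{V}_n:=\{U_n\}$ is exactly the routine verification the authors leave implicit; there is nothing to compare against, and your bookkeeping of the three target families $\mathcal{O}_j$, $\overline{\mathcal{O}_{ij}}$, $\mathcal{O}^D_{ij}$ is accurate.
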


In view of the above theorems, we have the following diagram:

\begin{pspicture}(12,4)
\put(0,0){H-Rothberger}
\put(4,0){H-almost Rothberger}
\put(9,0){H-weak Rothberger}
\put(0,2.5){H-Menger}
\put(4,2.5){H-almost Menger}
\put(9,2.5){H-weak Menger}
\psline{->}(0.8,0.8)(0.8,1.9)
\psline{->}(2.5,0.1)(3.6,0.1)
\psline{->}(5.4,0.8)(5.4,1.9)
\psline{->}(7.5,0.1)(8.6,0.1)
\psline{->}(10.4,0.8)(10.4,1.9)
\psline{->}(2.5,2.6)(3.6,2.6)
\psline{->}(7.5,2.6)(8.6,2.6)
\end{pspicture}\\

The following theorems establish the relationships between various Rothberger properties, various Menger properties and corresponding Rothberger and Menger properties in path homotopy. We skip the proofs of the following three theorems.
\begin{theorem} Let $([0,1],f)$ and $(X,g)$ be two $BTDSs$, where $([0,1],\tau_{1} ,\tau_{2})$ and $(X,\psi_{1} ,\psi_{2})$ are two bitopological spaces. If  $f\approx^{PH}_F g|x\to y$, $i,j \in \{1,2\}$ and $i\neq j$, then
$X$ has PH-Rothberger property $\implies$ $X$ has PH-almost Rothberger property $ \implies$ $X$ has PH-weak Rothberger property.
\end{theorem}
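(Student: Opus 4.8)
The plan is to mirror the single-selection argument of Theorem 4.1, working one ordered pair at a time: first I would establish, for fixed $i,j \in \{1,2\}$ with $i\neq j$, that the $(i,j)$-PH-Rothberger property implies the $(i,j)$-PH-almost Rothberger property, and then that the latter implies the $(i,j)$-PH-weak Rothberger property. Applying each implication to both ordered pairs $(1,2)$ and $(2,1)$ and taking conjunctions then yields the unindexed statements, and the composite implication follows by chaining the two.

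For the first implication, I would begin with an arbitrary sequence $\mathcal{X}=(\mathcal{U}_n : n \in \mathbb{N})$ of $\psi_i$-open covers of $X$ and an arbitrary point $m \in (0,1)$ with $H(m,0) \in \bigcup \mathcal{U}_n$ for every $n$. The $(i,j)$-PH-Rothberger property, i.e. $S_1^{PH}(\mathcal{O}_i,\mathcal{O}_j)$, produces a selection $(U_n : n \in \mathbb{N})$ with $U_n \in \mathcal{U}_n$ and $H(m,1)\in U_n$ for each $n$, and with $\{U_n : n\in\mathbb{N}\}\in\mathcal{O}_j$, that is $\bigcup_{n\in\mathbb{N}} U_n = X$. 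Since $U_n \subseteq \psi_j\text{-}cl(U_n)$ for every $n$, we get $X=\bigcup_n U_n \subseteq \bigcup_n \psi_j\text{-}cl(U_n)\subseteq X$, hence $\bigcup_n \psi_j\text{-}cl(U_n)=X$, that is $\{U_n : n\in\mathbb{N}\}\in\overline{\mathcal{O}_{ij}}$. The auxiliary condition $H(m,1)\in U_n$ is untouched because the identical selection is reused, so the same $(U_n)$ witnesses $S_1^{PH}(\mathcal{O}_i,\overline{\mathcal{O}_{ij}})$, i.e. the $(i,j)$-PH-almost Rothberger property.

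For the second implication, I would again fix $\mathcal{X}=(\mathcal{U}_n)$ and $m$ as above and apply the $(i,j)$-PH-almost Rothberger property to obtain $(U_n)$ with $U_n\in\mathcal{U}_n$, $H(m,1)\in U_n$, and $\bigcup_n \psi_j\text{-}cl(U_n)=X$. Since $\psi_j\text{-}cl(U_n)\subseteq \psi_j\text{-}cl\bigl(\bigcup_k U_k\bigr)$ for each $n$, it follows that $X=\bigcup_n \psi_j\text{-}cl(U_n)\subseteq \psi_j\text{-}cl\bigl(\bigcup_n U_n\bigr)\subseteq X$, whence $\psi_j\text{-}cl\bigl(\bigcup_n U_n\bigr)=X$ and $\{U_n : n\in\mathbb{N}\}\in\mathcal{O}^D_{ij}$; the same selection still satisfies $H(m,1)\in U_n$. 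Hence $X$ has the $(i,j)$-PH-weak Rothberger property, and the composite implication, that PH-Rothberger implies PH-weak Rothberger, follows by composing the two.

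I do not anticipate a genuine obstacle here: the argument is a routine chase through the definitions of $\mathcal{O}_j$, $\overline{\mathcal{O}_{ij}}$, $\mathcal{O}^D_{ij}$ and the $S_1^{PH}$ selection hypothesis, essentially the same as the single-selection analogue of Theorem 4.1. The only points that need care are keeping straight which family serves as the source and which as the target in each selection hypothesis — all three target families $\mathcal{O}_j$, $\overline{\mathcal{O}_{ij}}$ and $\mathcal{O}^D_{ij}$ are built from $\psi_i$-open sets — and observing that the path-homotopy side-condition $H(m,1)\in U_n$ is automatically inherited, so that the selection furnished by the stronger property directly witnesses the weaker one without modification.
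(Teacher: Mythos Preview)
Your proposal is correct and follows exactly the approach the paper intends: the paper omits the proof of this theorem, stating only that it is proved as in Theorem~4.1 (the H-Menger chain), and your argument is precisely the single-selection, path-homotopy analogue of that proof. One small slip in your closing remark: $\mathcal{O}_j$ consists of $\psi_j$-open covers, not families of $\psi_i$-open sets, but this does not affect the argument since the selected $U_n$ inherits $\psi_i$-openness from $\mathcal{U}_n$ regardless, and the covering condition $\bigcup_n U_n = X$ is all you use.
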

\begin{theorem} Let $([0,1],f)$ and $(X,g)$ be two $BTDSs$, where $([0,1],\tau_{1} ,\tau_{2})$ and $(X,\psi_{1} ,\psi_{2})$ are two bitopological spaces. If  $f\approx^{PH}_F g|x\to y$, $i,j \in \{1,2\}$ and $i\neq j$, then
$X$ has PH-Menger property $\implies$ $X$ has PH-almost Menger property $ \implies$ $X$ has PH-weak Menger property.
\end{theorem}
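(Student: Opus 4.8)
The plan is to mimic almost verbatim the argument used for Theorem 4.1, replacing the homotopy selection hypotheses $S_{fin}^{H}$ by the path homotopy selection hypotheses $S_{fin}^{PH}$ and the arbitrary point $x\in X$ by a point $m\in(0,1)$, since by Definition 4.4 these are the only points over which the path homotopy selection is quantified. First I would reduce the statement on $X$ to its two coordinate versions: $X$ has PH-Menger property means it has both $(1,2)$- and $(2,1)$-PH-Menger property, and similarly for the almost and weak variants, so it suffices to prove $(1,2)$-PH-Menger $\implies$ $(1,2)$-PH-almost Menger $\implies$ $(1,2)$-PH-weak Menger, the $(2,1)$ case being symmetric by interchanging the roles of $\psi_1$ and $\psi_2$.

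For the first implication, I would fix an arbitrary sequence $\mathcal{X}=(\mathcal{U}_n:n\in\mathbb{N})$ of $\psi_1$-open covers of $X$ and an arbitrary point $m\in(0,1)$ with $H(m,0)\in\bigcup\mathcal{U}_n$ for each $n$, where $H$ is the fixed $BTDS$-path homotopy realizing $f\approx^{PH}_F g\,|x\to y$. Invoking $S_{fin}^{PH}(\mathcal{O}_1,\mathcal{O}_2)$, I obtain finite sets $\mathcal{V}_n\subseteq\mathcal{U}_n$ with $H(m,1)\in\bigcup\mathcal{V}_n$ for each $n$ and $\bigcup_{n\in\mathbb{N}}\mathcal{V}_n\in\mathcal{O}_2$, i.e.\ $X=\bigcup_{n\in\mathbb{N}}\mathcal{V}_n$. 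Since $V\subseteq\psi_2\text{-}cl(V)$ for every $V$, I get $\bigcup_{n\in\mathbb{N}}\mathcal{V}_n\subseteq\bigcup_{n\in\mathbb{N}}\big(\bigcup_{V\in\mathcal{V}_n}\psi_2\text{-}cl(V)\big)$, and combined with the reverse inclusion $\bigcup_{n\in\mathbb{N}}\big(\bigcup_{V\in\mathcal{V}_n}\psi_2\text{-}cl(V)\big)\subseteq X$ this yields $X=\bigcup_{n\in\mathbb{N}}\big(\bigcup_{V\in\mathcal{V}_n}\psi_2\text{-}cl(V)\big)$, hence $\bigcup_{n\in\mathbb{N}}\mathcal{V}_n\in\overline{\mathcal{O}_{12}}$, which is exactly the witness required for $(1,2)$-PH-almost Menger property.

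For the second implication I would again start from the same data and, using $(1,2)$-PH-almost Menger property, produce finite sets $\mathcal{V}_n\subseteq\mathcal{U}_n$ with $H(m,1)\in\bigcup\mathcal{V}_n$ and $\bigcup_{n\in\mathbb{N}}\mathcal{V}_n\in\overline{\mathcal{O}_{12}}$, so $X=\bigcup_{n\in\mathbb{N}}\big(\bigcup_{V\in\mathcal{V}_n}\psi_2\text{-}cl(V)\big)$. The chain of inclusions $\bigcup_{V\in\mathcal{V}_n}\psi_2\text{-}cl(V)\subseteq\psi_2\text{-}cl\big(\bigcup_{V\in\mathcal{V}_n}V\big)\subseteq\psi_2\text{-}cl\big(\bigcup_{n\in\mathbb{N}}\mathcal{V}_n\big)$, taken over all $n$, gives $X\subseteq\psi_2\text{-}cl\big(\bigcup_{n\in\mathbb{N}}\mathcal{V}_n\big)$, and since the reverse inclusion is trivial we conclude $X=\psi_2\text{-}cl\big(\bigcup_{n\in\mathbb{N}}\mathcal{V}_n\big)$, i.e.\ $\bigcup_{n\in\mathbb{N}}\mathcal{V}_n\in\mathcal{O}^D_{12}$, which is the witness for $(1,2)$-PH-weak Menger property. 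The final ``$\implies$'' chain is obtained by composing these two steps, and the theorem follows by putting together the $(1,2)$ and $(2,1)$ halves.

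\textbf{Main obstacle.} There is essentially no deep obstacle here: the argument is a routine translation of the proof of Theorem 4.1, and the only points requiring minor care are (a) keeping the quantified point $m$ in the open interval $(0,1)$ as prescribed by Definition 4.4 rather than in the closed interval, and (b) checking that the symmetry between the $(1,2)$ and $(2,1)$ cases genuinely just swaps the indices of $\psi_1,\psi_2$ without affecting the closure manipulations, which it does. Accordingly, one may legitimately present only the $(1,2)$ direction in detail and remark that the rest is analogous, exactly as the excerpt already does for several neighbouring theorems.
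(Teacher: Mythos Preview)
Your proposal is correct and follows exactly the approach the paper intends: the paper explicitly omits the proof of this theorem as analogous to Theorem 4.1, and your adaptation---replacing $S_{fin}^{H}$ by $S_{fin}^{PH}$, the point $x\in X$ by $m\in(0,1)$, and the target space $Y$ by $X$---is precisely that analogy carried out. The two care points you flag (restricting $m$ to $(0,1)$ and the symmetry of the $(1,2)$/$(2,1)$ cases) are the only things to watch, and you handle them correctly.
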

\begin{theorem} Let $([0,1],f)$ and $(X,g)$ be two $BTDSs$, where $([0,1],\tau_{1} ,\tau_{2})$ and $(X,\psi_{1} ,\psi_{2})$ are two bitopological spaces. If  $f\approx^{PH}_F g|x\to y$, $i,j \in \{1,2\}$ and $i\neq j$, then
\begin{enumerate}
    \item[(i)] $X$ has PH-Rothberger property $\implies$ $X$ has PH-Menger property,
    \item[(ii)] $X$ has PH-almost Rothberger property $ \implies$ $X$ has PH-almost Menger property,
    \item[(iii)] $X$ has PH-weak Rothberger property $\implies$ $X$ has PH-weak Menger property.
\end{enumerate}
\end{theorem}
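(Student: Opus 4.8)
The approach is to follow the template of the $H$-homotopy analogue (Theorem 4.3), using the elementary observation that every singleton is a finite set. By definition, ``$X$ has PH-Rothberger property'' abbreviates the conjunction of the $(1,2)$- and $(2,1)$-PH-Rothberger properties, and likewise for the almost and weak variants; since the roles of the two indices are interchangeable, it suffices to prove each of (i)--(iii) for a fixed pair $i,j\in\{1,2\}$ with $i\neq j$.

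For (i), I would assume that $X$ has the $(i,j)$-PH-Rothberger property, i.e.\ that $S_1^{PH}(\mathcal{O}_i,\mathcal{O}_j)$ holds for the given PH-path homotopy $H$ (coming from $f\approx^{PH}_F g|x\to y$), and verify $S_{fin}^{PH}(\mathcal{O}_i,\mathcal{O}_j)$. So let $\mathcal{X}=(\mathcal{U}_n:n\in\mathbb{N})$ be an arbitrary sequence of $\psi_i$-open covers of $X$ and let $m\in(0,1)$ satisfy $H(m,0)\in\bigcup\mathcal{U}_n$ for every $n$. Applying $S_1^{PH}(\mathcal{O}_i,\mathcal{O}_j)$ yields a sequence $(U_n:n\in\mathbb{N})$ with $U_n\in\mathcal{U}_n$ and $H(m,1)\in U_n$ for each $n$, and with $\{U_n:n\in\mathbb{N}\}\in\mathcal{O}_j$. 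Putting $\mathcal{V}_n=\{U_n\}$ for each $n$ produces finite subsets $\mathcal{V}_n\subseteq\mathcal{U}_n$ with $H(m,1)\in U_n=\bigcup\mathcal{V}_n$, while $\bigcup_{n\in\mathbb{N}}\mathcal{V}_n=\{U_n:n\in\mathbb{N}\}$ is a $\psi_j$-open cover of $X$, hence an element of $\mathcal{O}_j$. This is exactly $S_{fin}^{PH}(\mathcal{O}_i,\mathcal{O}_j)$, i.e.\ the $(i,j)$-PH-Menger property.

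Parts (ii) and (iii) run along the same lines; only the target class $\mathcal{B}$ and the single set-theoretic identity to be checked for the choice $\mathcal{V}_n=\{U_n\}$ change. For (ii) one takes $\mathcal{B}=\overline{\mathcal{O}_{ij}}$ and uses
\[
\bigcup_{n\in\mathbb{N}}\ \bigcup_{V\in\mathcal{V}_n}\psi_{j}-cl(V)\ =\ \bigcup_{n\in\mathbb{N}}\psi_{j}-cl(U_n)\ =\ X,
\]
the last equality being the content of $\{U_n:n\in\mathbb{N}\}\in\overline{\mathcal{O}_{ij}}$ supplied by the PH-almost Rothberger property. For (iii) one takes $\mathcal{B}=\mathcal{O}^D_{ij}$ and uses $\bigcup\big(\bigcup_{n\in\mathbb{N}}\mathcal{V}_n\big)=\bigcup_{n\in\mathbb{N}}U_n$, so that $\psi_{j}-cl\big(\bigcup(\bigcup_{n\in\mathbb{N}}\mathcal{V}_n)\big)=\psi_{j}-cl(\bigcup_{n\in\mathbb{N}}U_n)=X$ by the PH-weak Rothberger property. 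In both cases $H(m,1)\in\bigcup\mathcal{V}_n$ is automatic and the input class $\mathcal{O}_i$ is unchanged, so the respective $S_{fin}^{PH}$ statements follow.

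I do not expect a genuine obstacle here: the whole argument is that replacing the Rothberger-type selector $(U_n)$ by the Menger-type selector $(\{U_n\})$ leaves intact both the requirement that $H(m,1)$ be covered at stage $n$ and the membership in $\mathcal{O}_j$, $\overline{\mathcal{O}_{ij}}$ or $\mathcal{O}^D_{ij}$, because $\bigcup_{n}\{U_n\}$ encodes precisely the same covering/closure information as the plain set $\{U_n:n\in\mathbb{N}\}$. The only place deserving a moment's attention is getting the nesting of the unions and $\psi_j$-closures right in the identities for (ii) and (iii), matching the notation of Definition 4.2.
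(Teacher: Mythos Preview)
Your proposal is correct and is precisely the intended argument. The paper itself omits the proof of this theorem (it states ``We skip the proofs of the following three theorems'' for Theorems 4.4--4.6), but the template it points to is exactly what you do: pass from the $S_1^{PH}$ selector $(U_n)$ to the $S_{fin}^{PH}$ selector $(\{U_n\})$ and observe that the covering/closure conditions in $\mathcal{O}_j$, $\overline{\mathcal{O}_{ij}}$, $\mathcal{O}^D_{ij}$ are preserved verbatim.
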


Converse of the above theorems 4.1, 4.2, 4.3, 4.4, 4.5 and 4.6 are not true in general. We give the following counterexample regarding the converse of Theorem 4.3. part (ii). Other counterexamples can be found easily.

\begin{example} Let us consider the bitopological dynamical system $(\mathbb{R},f)$, where $ (\mathbb{R},\tau_{1},\tau_{2})$ is a bitopological space, $\tau_1$ is the lower limit topology on $\mathbb{R}$ and $\tau_2$ is the discrete topology on $\mathbb{R}$. The pairwise continuous map $ f:\mathbb{R}\rightarrow \mathbb{R} $ is defined by $ f(x)=x+2 $. Also, we consider the bitopological dynamical system $(\mathbb{R},g)$; where $ (\mathbb{R},\psi_{1},\psi_{2})$ is a bitopological space, $\psi_{1}$ is the usual topology on $\mathbb{R}$ and $\psi_{2}$ is the discrete topology on $\mathbb{R}$. The pairwise continuous map $ g:\mathbb{R}\rightarrow \mathbb{R} $ is defined by $ g(x)=x+1 $. Now, we define the map $F:(\mathbb{R},\tau_{1},\tau_{2}) \to (\mathbb{R},\psi_{1},\psi_{2})$ by $F(x)=x$. Clearly, the map $F$ is pairwise continuous. It is easy to check that the map $H:\mathbb{R}\times [0,1] \rightarrow \mathbb{R}$ defined by
\[ 
H(x,t)=\begin{cases} 
      x+2 & \text{when} \; t=0, \\
      x & \text{when} \; 0<t<1, \\
      x+1  & \text{when} \; t=1. 
   \end{cases}
\] 
is a $BTDS-$homotopy between $f$ and $g$. 

Now, let $\mathcal{X}=(\mathcal{U}_n: n\in \mathbb{N})$ be a sequence of $\psi_{1}$-open covers of $\mathbb{R}$ such that for any $n\in \mathbb{N}$ and $U \in \mathcal{U}_n$, we have $D(U)<\frac{1}{3^n}$, where $D(U)$ is the diameter of the set $U$ i.e. $D(U)=\sup \{|x-y|:x,y \in U\}$. It is easy to check that for any arbitrary point $x \in X$, $H(x,0)\in \bigcup \mathcal{U}_n$, for each $n \in \mathbb{N}$ as $\mathcal{U}_n$ covers $\mathbb{R}$. For simplicity, we consider $x=0$. Now, for any sequence $(U_n : n \in \mathbb{N})$ such that $U_n \in \mathcal{U}_n$, for each $n \in \mathbb{N}$ and $H(0, 1)\in U_n$, for each $n \in \mathbb{N}$; we have $U_n =(1-\epsilon,1+\epsilon)$, where $\epsilon<\frac{1}{3}$ depends on $n$. This gives $\psi_{2}-cl(U_n)=U_n$. Thus, $ \mathbb{R}\neq \bigcup_{n \in \mathbb{N}} \psi_{2}-cl(U_n)$. Hence, $\mathbb{R}$ doesn't have  $(1,2)$-H-almost Rothberger property. 

We show that $\mathbb{R}$ has $(1,2)$-H-almost Menger property. Suppose that $\mathcal{X}=(\mathcal{U}_n: n\in \mathbb{N})$ is an arbitrary sequence of $\psi_{1}$-open covers of $\mathbb{R}$ and that $x \in X$ is an arbitrary point such that $H(x,0)\in \bigcup \mathcal{U}_n$, for each $n \in \mathbb{N}$. We can construct a sequence $\mathcal{Y}=(\mathcal{V}_n: n\in \mathbb{N})$ such that for each $n\in \mathbb{N}$, $\mathcal{V}_n$ is a finite subset of $\mathcal{U}_n$ by particularly choosing the set of $\mathcal{U}_n$ containing $H(x,1)$ in $\mathcal{V}_n$, for each $n$ so that $H(x,1)\in \bigcup \mathcal{V}_n$, $\mathcal{V}_n\in \mathcal{Y}$, for each $n \in \mathbb{N}$ (it is important to note that $H(x,1) \in \bigcup \mathcal{U}_n$ as $\mathcal{U}_n$ covers $\mathbb{R}$) and also choosing other sets of $\mathcal{V}_n$ so that $Y=\bigcup_{n \in \mathbb{N}} \psi_{2}-cl(U_n)$.
\end{example}

Although converse of the theorems 4.1, 4.2, 4.3, 4.4, 4.5 and 4.6 are not true in general; but, if the bitopological space $(Y,\psi_{1} ,\psi_{2})$ has some additional property, then we have the equalities. For this purpose, we introduce the pairwise $P$-space in bitopological space.
\begin{definition}
A bitopological space $ (X,\tau_{1} ,\tau_{2}) $ is  called a pairwise $P$-space if both $ (X,\tau_{1})$ and $ (X,\tau_{2})$ are $P$-spaces.
\end{definition}

\begin{theorem}
Let $(X,f)$ and $(Y,g)$ be two $BTDSs$, where $(X,\tau_{1} ,\tau_{2})$ and $(Y,\psi_{1} ,\psi_{2})$ are two bitopological spaces. If  $f\approx^{H}_F g$ and if $(Y,\psi_{1} ,\psi_{2})$ is a pairwise $T_3$ and pairwise locally compact space , then
    $Y$ has H-Rothberger property iff $Y$ has H-almost Rothberger property.
\end{theorem}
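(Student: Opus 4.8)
The forward implication, that $Y$ having H-Rothberger property implies $Y$ has H-almost Rothberger property, is the first arrow of Theorem 4.2, so only the converse needs proof, and only under the extra hypotheses. My plan is to first exploit the separation and local-compactness hypotheses to force the two topologies on $Y$ to coincide, and then to run the classical ``regular space $+$ almost-selective $\Rightarrow$ selective'' refinement argument inside the $S_1^H$-machinery. The step I expect to be the crux is the collapse $\psi_1=\psi_2$: without it the sets selected at the end of the argument are only $\psi_i$-open and need not form a member of $\mathcal{O}_j$, so the argument would not reach H-Rothberger; everything after the collapse is the standard refinement trick with a little homotopy bookkeeping.

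\textbf{Step 1 (the two topologies coincide).} Since $(Y,\psi_{1},\psi_{2})$ is pairwise $T_3$, it is pairwise regular and pairwise $T_1$, hence pairwise Hausdorff (in a pairwise $T_1$ space every singleton is $\psi_1$-closed and $\psi_2$-closed, so pairwise regularity separates any point from a singleton missing it). Pairwise local compactness gives that $\psi_1$ is locally compact with respect to $\psi_2$ and $\psi_2$ is locally compact with respect to $\psi_1$; applying Proposition 2.2 in each order yields $\psi_1\subseteq\psi_2$ and $\psi_2\subseteq\psi_1$, so $\psi_1=\psi_2=:\psi$. Hence a set is $\psi_i$-open iff it is $\psi_j$-open, and $\psi_i-cl(\cdot)=\psi_j-cl(\cdot)$; this is what later lets a $\psi_i$-open cover be treated as an element of $\mathcal{O}_j$.

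\textbf{Step 2 (shrink and select).} Fix $i\neq j$ and suppose $Y$ has $(i,j)$-H-almost Rothberger property, i.e. $S_1^H(\mathcal{O}_i,\overline{\mathcal{O}_{ij}})$ holds. Let $(\mathcal{U}_n:n\in\mathbb{N})$ be a sequence of $\psi_i$-open covers of $Y$ and $x\in X$ with $H(x,0)\in\bigcup\mathcal{U}_n$ for all $n$. For each $n$ and each $y\in Y$ choose $U^n_y\in\mathcal{U}_n$ with $y\in U^n_y$ and, by pairwise regularity in the form of Proposition 2.1(2), a $\psi_i$-open set $W^n_y$ with $y\in W^n_y\subseteq\psi_j-cl(W^n_y)\subseteq U^n_y$. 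Then $\mathcal{V}_n:=\{W^n_y:y\in Y\}$ is a $\psi_i$-open cover of $Y$, so $H(x,0)\in\bigcup\mathcal{V}_n$ for every $n$, and $(\mathcal{V}_n:n\in\mathbb{N})$ together with the same point $x$ is a legitimate input for $S_1^H(\mathcal{O}_i,\overline{\mathcal{O}_{ij}})$. Applying the hypothesis produces $V_n\in\mathcal{V}_n$ with $H(x,1)\in V_n$ for each $n$ and $\bigcup_{n\in\mathbb{N}}\psi_j-cl(V_n)=Y$.

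\textbf{Step 3 (unshrink and finish).} Writing $V_n=W^n_{y(n)}$, put $U_n:=U^n_{y(n)}\in\mathcal{U}_n$, so that $\psi_j-cl(V_n)\subseteq U_n$. Then $H(x,1)\in V_n\subseteq U_n$ for every $n$, and $Y=\bigcup_n\psi_j-cl(V_n)\subseteq\bigcup_n U_n\subseteq Y$, so $\{U_n:n\in\mathbb{N}\}$ covers $Y$; by Step 1 each $U_n$ is $\psi_j$-open, whence $\{U_n:n\in\mathbb{N}\}\in\mathcal{O}_j$. This is precisely the conclusion demanded by $S_1^H(\mathcal{O}_i,\mathcal{O}_j)$, so $Y$ has $(i,j)$-H-Rothberger property. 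Carrying out the same argument for $(i,j)=(1,2)$ and $(i,j)=(2,1)$ shows that $Y$ has H-Rothberger property, which together with Theorem 4.2 gives the stated equivalence.
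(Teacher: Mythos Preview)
Your proof is correct and follows essentially the same approach as the paper's: use pairwise regularity (via Proposition 2.1(2)) to shrink each $\psi_i$-open cover to one whose $\psi_j$-closures refine the original, apply the almost-Rothberger hypothesis to the shrunk covers, then unshrink and invoke Proposition 2.2 to conclude that the selected $\psi_i$-open sets lie in $\mathcal{O}_j$. The only cosmetic difference is that you front-load the topological collapse $\psi_1=\psi_2$ by applying Proposition 2.2 in both directions at the outset, whereas the paper invokes only $\psi_1\subseteq\psi_2$ at the end of the $(1,2)$-argument and leaves the symmetric $(2,1)$-case (which would need $\psi_2\subseteq\psi_1$) to the reader; your organization is arguably cleaner but the mathematical content is identical.
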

\begin{proof}
The necessary part follows from Theorem 4.2. For the sufficient part, suppose that $Y$ has H-almost Rothberger property. Then, $Y$ has both $(1,2)$-H-almost Rothberger property and $(2,1)$-H-almost Rothberger property. We shall prove that if $Y$ has $(1,2)$-H-almost Rothberger property, then $Y$ has $(1,2)$-H-Rothberger  property. The proof of $(2,1)$-H-almost Rothberger property implies $(2,1)$-H-Rothberger  property will be similar.

Let $Y$ has $(1,2)$-H-almost Rothberger property. Suppose that $\mathcal{X}=(\mathcal{U}_n: n\in \mathbb{N})$ is an arbitrary sequence of $\psi_{1}$-open covers of $Y$ and that $x \in X$ is an arbitrary point such that $H(x,0)\in \bigcup \mathcal{U}_n$, for each $n \in \mathbb{N}$. Since $Y$ is pairwise $T_3$, so it is pairwise regular. Hence, $\psi_{1}$ is regular with respect to $\psi_{2}$. This implies that for each $n\in \mathbb{N}$, there exists a $\psi_{1}$-open cover $\mathcal{V}_n$ of $Y$ such that $\mathcal{V}_{n}^{*}=\{\psi_{2}-cl(V):V \in \mathcal{V}_n\}$ is a refinement of $\mathcal{U}_n$, where $H(x,0)\in \bigcup \mathcal{V}_n$,  for each $n \in \mathbb{N}$. Now, $(1,2)$-H-almost Rothberger property of $Y$ implies that there is a sequence $(V_n : n \in \mathbb{N})$ such that $V_n \in \mathcal{V}_n$, for each $n \in \mathbb{N}$ and $H(x, 1)\in V_n$, for each $n \in \mathbb{N}$ and $\{V_n : n \in \mathbb{N}\} \in \overline{\mathcal{O}_{12}}$. This gives $Y=\bigcup_{n \in \mathbb{N}} \psi_{2}-cl(V_n)$. Since for each $n \in \mathbb{N}$, $\mathcal{V}_{n}^{*}$ is a refinement of $\mathcal{U}_n$; so for each $n \in \mathbb{N}$, there exists $U_n \in \mathcal{U}_n$ such that $\psi_{2}-cl(V_n) \subseteq U_n$. This gives $\bigcup_{n \in \mathbb{N}} \psi_{2}-cl(V_n) \subseteq \bigcup_{n \in \mathbb{N}} U_n$. Hence, $Y \subseteq \bigcup_{n \in \mathbb{N}} U_n$. Also for each $n \in \mathbb{N}$, $H(x, 1)\in V_n \subseteq \psi_{2}-cl(V_n) \subseteq U_n$. 

Now, since $Y$ is pairwise $T_3$ and pairwise locally compact, so it is pairwise regular, pairwise Hausdorff and pairwise locally compact. Then by proposition 2.2, $\psi_{1} \subseteq \psi_{2}$. This implies that $\{U_n : n \in \mathbb{N}\} \in \mathcal{O}_{2}$. Thus, $(Y,\psi_{1} ,\psi_{2})$ is $(1,2)$-H-Rothberger.
\end{proof}
\begin{theorem}
Let $(X,f)$ and $(Y,g)$ be two $BTDSs$, where $(X,\tau_{1} ,\tau_{2})$ and $(Y,\psi_{1} ,\psi_{2})$ are two bitopological spaces. If  $f\approx^{H}_F g$ and if $(Y,\psi_{1} ,\psi_{2})$ is a pairwise $P$-space, then
    $Y$ has H-almost Rothberger property iff $Y$ has H-weak Rothberger property.
\end{theorem}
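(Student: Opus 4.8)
The necessary direction is immediate from Theorem 4.3, so the work lies entirely in the sufficient direction: assuming $Y$ has the H-weak Rothberger property, I would show $Y$ has the H-almost Rothberger property. As in the proofs of Theorems 4.1 and 4.7, it is enough to prove that $(1,2)$-H-weak Rothberger property implies $(1,2)$-H-almost Rothberger property, the $(2,1)$-case being handled by the identical argument with the roles of $\psi_1$ and $\psi_2$ interchanged.

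The plan for the $(1,2)$-case is as follows. Fix an arbitrary sequence $\mathcal{X}=(\mathcal{U}_n:n\in\mathbb{N})$ of $\psi_1$-open covers of $Y$ and an arbitrary point $x\in X$ with $H(x,0)\in\bigcup\mathcal{U}_n$ for every $n$. By the $(1,2)$-H-weak Rothberger property there is a selection $(U_n:n\in\mathbb{N})$ with $U_n\in\mathcal{U}_n$ and $H(x,1)\in U_n$ for every $n$, and with $\{U_n:n\in\mathbb{N}\}\in\mathcal{O}^D_{12}$, i.e. $\psi_{2}-cl(\bigcup_{n\in\mathbb{N}}U_n)=Y$. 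I would keep exactly this selection $(U_n)$ and only upgrade the conclusion to $\{U_n:n\in\mathbb{N}\}\in\overline{\mathcal{O}_{12}}$, i.e. to $\bigcup_{n\in\mathbb{N}}\psi_{2}-cl(U_n)=Y$.

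This is where the pairwise $P$-space hypothesis is used. Since $(Y,\psi_2)$ is a $P$-space, the countable union $\bigcup_{n\in\mathbb{N}}\psi_{2}-cl(U_n)$ of $\psi_2$-closed sets is itself $\psi_2$-closed. On the other hand $\bigcup_{n\in\mathbb{N}}U_n\subseteq\bigcup_{n\in\mathbb{N}}\psi_{2}-cl(U_n)$, so taking $\psi_2$-closures of both sides and using that the right-hand side is already $\psi_2$-closed yields $Y=\psi_{2}-cl(\bigcup_{n\in\mathbb{N}}U_n)\subseteq\bigcup_{n\in\mathbb{N}}\psi_{2}-cl(U_n)\subseteq Y$, hence $\bigcup_{n\in\mathbb{N}}\psi_{2}-cl(U_n)=Y$. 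Thus $\{U_n:n\in\mathbb{N}\}\in\overline{\mathcal{O}_{12}}$ and $Y$ is $(1,2)$-H-almost Rothberger; running the mirror argument gives the $(2,1)$-version, so $Y$ has the H-almost Rothberger property.

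I do not anticipate a real obstacle: the single nontrivial ingredient is the elementary fact that in a $P$-space a countable union of closed sets is closed, which is precisely what converts a ``closure-of-the-union'' (weak) covering condition into a ``union-of-closures'' (almost) covering condition. The only points requiring care are to observe that the selection witnessing the weak property can be reused verbatim — in particular the condition $H(x,1)\in U_n$ is left untouched — and to carry both parity cases $(1,2)$ and $(2,1)$ so as to obtain the unindexed H-almost Rothberger property.
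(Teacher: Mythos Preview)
Your proposal is correct and follows essentially the same route as the paper's proof: apply the weak-Rothberger selection, then use that in a $P$-space the countable union $\bigcup_n \psi_2\text{-}cl(U_n)$ is $\psi_2$-closed, hence contains $\psi_2\text{-}cl(\bigcup_n U_n)=Y$. One minor slip: the forward implication (H-almost Rothberger $\Rightarrow$ H-weak Rothberger) is Theorem~4.2, not Theorem~4.3.
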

\begin{proof}
The necessary part follows from theorem 4.2. For the sufficient part, suppose that $Y$ has H-weak Rothberger property. Then, $Y$ has both $(1,2)$-H-weak Rothberger property and $(2,1)$-H-weak Rothberger property. We shall prove that if $Y$ has $(1,2)$-H-weak Rothberger property, then $Y$ has $(1,2)$-H-almost Rothberger  property. The proof of $(2,1)$-H-weak Rothberger property implies $(2,1)$-H-almost Rothberger  property will be similar.

Suppose that $\mathcal{X}=(\mathcal{U}_n: n\in \mathbb{N})$ is an arbitrary sequence of $\psi_{1}$-open covers of $Y$ and that $x \in X$ is an arbitrary point such that $H(x,0)\in \bigcup \mathcal{U}_n$, for each $n \in \mathbb{N}$. The $(1,2)$-H-weak Rothberger property of $Y$ implies that there is a sequence $(U_n : n \in \mathbb{N})$ such that $U_n \in \mathcal{U}_n$, for each $n \in \mathbb{N}$ and $H(x, 1)\in U_n$, for each $n \in \mathbb{N}$ and $\{U_n : n \in \mathbb{N}\} \in \mathcal{O}^D_{12}$. This gives $Y = \psi_{2}-cl(\bigcup_{n \in \mathbb{N}} U_n)$. 

Now for each $n \in \mathbb{N}$, $U_n \subseteq \psi_{2}-cl(U_n)$. This gives $\bigcup_{n \in \mathbb{N}}U_n \subseteq \bigcup_{n \in \mathbb{N}} \psi_{2}-cl(U_n)$. Since $Y$ is a pairwise $P$-space, so $\bigcup_{n \in \mathbb{N}} \psi_{2}-cl(U_n)$ is a $\psi_{2}$-closed set containing $\bigcup_{n \in \mathbb{N}}U_n$. Also, $\psi_{2}-cl(\bigcup_{n \in \mathbb{N}} U_n)$ is the smallest $\psi_{2}$-closed set containing $\bigcup_{n \in \mathbb{N}}U_n$. This gives $\psi_{2}-cl(\bigcup_{n \in \mathbb{N}} U_n) \subseteq \bigcup_{n \in \mathbb{N}} \psi_{2}-cl(U_n)$. Thus, $Y \subseteq \bigcup_{n \in \mathbb{N}} \psi_{2}-cl(U_n)$. Also, $\bigcup_{n \in \mathbb{N}} \psi_{2}-cl(U_n) \subseteq Y$. Hence, $Y = \bigcup_{n \in \mathbb{N}} \psi_{2}-cl(U_n)$. This gives $\{U_n : n \in \mathbb{N}\} \in \overline{\mathcal{O}_{12}}$. Thus, $Y$ is $(1,2)$-H-almost Rothberger. 
\end{proof}
Theorem 4.7. and theorem 4.8. together gives the following corollary.
\begin{corollary}
Let $(X,f)$ and $(Y,g)$ be two $BTDSs$, where $(X,\tau_{1} ,\tau_{2})$ and $(Y,\psi_{1} ,\psi_{2})$ are two bitopological spaces. If  $f\approx^{H}_F g$ and if $(Y,\psi_{1} ,\psi_{2})$ is a pairwise $T_3$ and pairwise locally compact pairwise $P$-space, then
    $Y$ has H-Rothberger property iff $Y$ has H-weak Rothberger property.
\end{corollary}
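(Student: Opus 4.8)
The plan is to derive the corollary directly by composing the two preceding equivalence theorems, since the hypothesis imposed on $(Y,\psi_{1},\psi_{2})$ already bundles together everything those theorems separately require. First I would note that a space which is pairwise $T_3$, pairwise locally compact, and a pairwise $P$-space is in particular (i) pairwise $T_3$ and pairwise locally compact, and (ii) a pairwise $P$-space; hence both Theorem 4.7 and Theorem 4.8 apply to $Y$ with no further work.

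Next I would invoke Theorem 4.7 to obtain the biconditional that $Y$ has H-Rothberger property if and only if $Y$ has H-almost Rothberger property, and Theorem 4.8 to obtain that $Y$ has H-almost Rothberger property if and only if $Y$ has H-weak Rothberger property. Chaining these two biconditionals through the common middle term (the H-almost Rothberger property) immediately gives that $Y$ has H-Rothberger property if and only if $Y$ has H-weak Rothberger property, which is exactly the claim.

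Spelled out concretely, the forward implication (H-Rothberger $\implies$ H-weak Rothberger) is already contained in Theorem 4.2 (via the H-almost Rothberger property) and uses none of the extra hypotheses; for the reverse implication one first applies the sufficiency direction of Theorem 4.8, using that $Y$ is a pairwise $P$-space, to pass from H-weak Rothberger to H-almost Rothberger, and then the sufficiency direction of Theorem 4.7, using that $Y$ is pairwise $T_3$ and pairwise locally compact, to pass from H-almost Rothberger to H-Rothberger. I anticipate no genuine obstacle here: the only thing to check is that the combined hypothesis entails the individual hypotheses of Theorems 4.7 and 4.8, which is immediate from the definitions of pairwise $T_3$ space, pairwise locally compact space, and pairwise $P$-space. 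Thus the corollary is a purely formal consequence of the two theorems, and the ``proof'' amounts to composing the stated equivalences.
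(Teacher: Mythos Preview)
Your proposal is correct and matches the paper's approach exactly: the paper simply states that the corollary follows by combining Theorem 4.7 and Theorem 4.8, which is precisely the chaining-through-H-almost-Rothberger argument you outline.
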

\begin{theorem} Let $(X,f)$ and $(Y,g)$ be two $BTDSs$, where $(X,\tau_{1} ,\tau_{2})$ and $(Y,\psi_{1} ,\psi_{2})$ are two bitopological spaces. If  $f\approx^{H}_F g$ and if $(Y,\psi_{1} ,\psi_{2})$ is a pairwise $T_3$ and pairwise locally compact pairwise $P$-space, then following conditions are equivalent:
\begin{enumerate}
    \item[(i)] $Y$ has H-Rothberger property,
    \item[(ii)] $Y$ has H-almost Rothberger property,
    \item[(iii)] $Y$ has H-weak Rothberger property.
\end{enumerate}
\end{theorem}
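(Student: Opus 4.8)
The statement is a packaging of the two preceding equivalence theorems, so the plan is simply to observe that the hypotheses of Theorem 4.9 are strong enough to invoke both Theorem 4.7 and Theorem 4.8 at once. Concretely, I would first note that $(Y,\psi_1,\psi_2)$ being a pairwise $T_3$ and pairwise locally compact \emph{pairwise $P$-space} means in particular that it is pairwise $T_3$ and pairwise locally compact (the hypothesis needed for Theorem 4.7) and also that it is a pairwise $P$-space (the hypothesis needed for Theorem 4.8); the homotopy hypothesis $f\approx^{H}_F g$ is common to both.

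The key steps, in order, are: (1) apply Theorem 4.7 with the present $(X,f)$, $(Y,g)$, $F$, $H$ to get $(i)\Leftrightarrow(ii)$, i.e.\ $Y$ has H-Rothberger property iff $Y$ has H-almost Rothberger property; (2) apply Theorem 4.8 to the same data to get $(ii)\Leftrightarrow(iii)$, i.e.\ $Y$ has H-almost Rothberger property iff $Y$ has H-weak Rothberger property; (3) chain the two biconditionals to conclude that $(i)$, $(ii)$, $(iii)$ are mutually equivalent. One should also remark (as the paper already does via Theorem 4.2) that the implications $(i)\Rightarrow(ii)\Rightarrow(iii)$ hold unconditionally, so the content added by the extra separation/compactness/$P$-space hypotheses is exactly the reverse implications $(iii)\Rightarrow(ii)$ and $(ii)\Rightarrow(i)$.

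There is essentially no technical obstacle here, since the argument is a formal composition of results already proved; the only thing worth a sentence of care is confirming that the two source theorems are stated for precisely the same configuration (a $BTDS$-homotopy $H$ between $f$ and $g$ relative to $F$, with $i,j\in\{1,2\}$, $i\neq j$), so that the sequences $(\mathcal{U}_n)$ and the distinguished point $x\in X$ with $H(x,0)\in\bigcup\mathcal{U}_n$ used in one are exactly those used in the other. Given that alignment, the proof is one line invoking Theorems 4.7 and 4.8. This is exactly how Corollary 4.1 was obtained from the same two theorems, and Theorem 4.9 is just the three-way restatement of that corollary.
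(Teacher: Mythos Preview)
Your proposal is correct and matches the paper's own proof exactly: the paper simply states that the result follows by combining Theorem 4.7 and Theorem 4.8, which is precisely the chaining of $(i)\Leftrightarrow(ii)$ and $(ii)\Leftrightarrow(iii)$ you describe.
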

\begin{proof}
Proof follows by combining Theorem 4.7. and Theorem 4.8.
\end{proof}
\begin{theorem}
Let $(X,f)$ and $(Y,g)$ be two $BTDSs$, where $(X,\tau_{1} ,\tau_{2})$ and $(Y,\psi_{1} ,\psi_{2})$ are two bitopological spaces. If  $f\approx^{H}_F g$ and if $(Y,\psi_{1} ,\psi_{2})$ is a pairwise $T_3$ and pairwise locally compact space , then $Y$ has H-Menger property iff $Y$ has H-almost Menger property.
\end{theorem}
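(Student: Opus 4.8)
The plan is to imitate the proof of Theorem 4.7, replacing single-element selections by finite selections throughout. The necessity direction is immediate from Theorem 4.1, so the content lies in the converse: assuming $Y$ has H-almost Menger property, I want to deduce H-Menger property. Since H-almost Menger means $Y$ has both $(1,2)$- and $(2,1)$-H-almost Menger property, it suffices to show that $(1,2)$-H-almost Menger implies $(1,2)$-H-Menger, the $(2,1)$ case being entirely symmetric.

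First I would fix an arbitrary sequence $\mathcal{X}=(\mathcal{U}_n:n\in\mathbb{N})$ of $\psi_{1}$-open covers of $Y$ and an arbitrary point $x\in X$ with $H(x,0)\in\bigcup\mathcal{U}_n$ for each $n$. Because $Y$ is pairwise $T_3$ it is pairwise regular, so $\psi_{1}$ is regular with respect to $\psi_{2}$; hence, by Proposition 2.1(2), each point lying in a $\psi_{1}$-open set $G$ also lies in some $\psi_{1}$-open $H$ with $\psi_{2}-cl(H)\subseteq G$. Applying this to the members of $\mathcal{U}_n$, I obtain for every $n$ a $\psi_{1}$-open cover $\mathcal{V}_n$ of $Y$ such that $\mathcal{V}_{n}^{*}=\{\psi_{2}-cl(V):V\in\mathcal{V}_n\}$ refines $\mathcal{U}_n$; since $\mathcal{V}_n$ covers $Y$, automatically $H(x,0)\in\bigcup\mathcal{V}_n$ for each $n$.

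Next I would apply the $(1,2)$-H-almost Menger property to the sequence $(\mathcal{V}_n:n\in\mathbb{N})$, obtaining finite subfamilies $\mathcal{W}_n\subseteq\mathcal{V}_n$ with $H(x,1)\in\bigcup\mathcal{W}_n$ for each $n$ and $Y=\bigcup_{n\in\mathbb{N}}\bigl(\bigcup_{W\in\mathcal{W}_n}\psi_{2}-cl(W)\bigr)$. For each $n$ and each $W\in\mathcal{W}_n$, the refinement property yields some $U_W\in\mathcal{U}_n$ with $\psi_{2}-cl(W)\subseteq U_W$; set $\mathcal{V}_n'=\{U_W:W\in\mathcal{W}_n\}$, a finite subset of $\mathcal{U}_n$. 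Then $\bigcup_{W\in\mathcal{W}_n}\psi_{2}-cl(W)\subseteq\bigcup\mathcal{V}_n'$, so $Y=\bigcup_{n\in\mathbb{N}}\bigcup\mathcal{V}_n'$, i.e.\ $\bigcup_{n\in\mathbb{N}}\mathcal{V}_n'=Y$; moreover the member $W\in\mathcal{W}_n$ containing $H(x,1)$ satisfies $H(x,1)\in W\subseteq\psi_{2}-cl(W)\subseteq U_W\in\mathcal{V}_n'$, so $H(x,1)\in\bigcup\mathcal{V}_n'$ for each $n$.

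Finally, since $Y$ is pairwise $T_3$ and pairwise locally compact, it is pairwise regular, pairwise Hausdorff and pairwise locally compact, so Proposition 2.2 gives $\psi_{1}\subseteq\psi_{2}$; hence every $U_W$, being $\psi_{1}$-open, is $\psi_{2}$-open, and $\bigcup_{n\in\mathbb{N}}\mathcal{V}_n'$ is a $\psi_{2}$-open cover of $Y$, that is $\bigcup_{n\in\mathbb{N}}\mathcal{V}_n'\in\mathcal{O}_{2}$. This shows $Y$ is $(1,2)$-H-Menger, completing the argument. I expect the only real obstacle to be the bookkeeping in passing from the finite selections $\mathcal{W}_n$ inside the refined covers back to finite subfamilies of the original $\mathcal{U}_n$ while keeping $H(x,1)$ covered at every stage; the substantive topological input (regularity producing the closure-refinements, and local compactness forcing $\psi_{1}\subseteq\psi_{2}$) is supplied verbatim by Propositions 2.1 and 2.2.
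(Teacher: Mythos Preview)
Your proposal is correct and follows essentially the same approach as the paper's own proof: use pairwise regularity to produce closure-refinements $\mathcal{V}_n$ of the given covers, apply the $(1,2)$-H-almost Menger property to the refined sequence, pull the resulting finite selections back to finite subfamilies of the original $\mathcal{U}_n$ via the refinement relation, and then invoke Proposition~2.2 to conclude these families are $\psi_2$-open. The only differences are notational (your $\mathcal{V}_n,\mathcal{W}_n,\mathcal{V}_n'$ play the roles of the paper's $\mathcal{U}^{/}_n,\mathcal{V}^{/}_n,\mathcal{V}_n$).
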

\begin{proof}
The necessary part follows from Theorem 4.1. For the sufficient part, suppose that $Y$ has H-almost Menger property. Then, $Y$ has both $(1,2)$-H-almost Menger property and $(2,1)$-H-almost Menger property. We shall prove that if $Y$ has $(1,2)$-H-almost Menger property, then $Y$ has $(1,2)$-H-Menger  property. The proof of $(2,1)$-H-almost Menger property implies $(2,1)$-H-Menger  property will be similar.

Suppose that $\mathcal{X}=(\mathcal{U}_n: n\in \mathbb{N})$ is an arbitrary sequence of $\psi_{1}$-open covers of $Y$ and that $x \in X$ is an arbitrary point such that $H(x,0)\in \bigcup \mathcal{U}_n$, for each $n \in \mathbb{N}$. Since $Y$ is pairwise $T_3$, so it is pairwise regular i.e. $\psi_{1}$ is regular with respect to $\psi_{2}$. This implies that for each $n\in \mathbb{N}$, there exists a $\psi_{1}$-open cover $\mathcal{U}^{/}_n$ of $Y$ such that $\mathcal{U}^{*}_n=\{\psi_{2}-cl(U^{/}):U^{/} \in \mathcal{U}^{/}_n\}$ is a refinement of $\mathcal{U}_n$, where $H(x,0)\in \bigcup \mathcal{U}^{/}_n$,  for each $n \in \mathbb{N}$. Now, $(1,2)$-H-almost Menger property of $Y$ implies that there is a sequence $\mathcal{Y}^/=(\mathcal{V}^{/}_n: n\in \mathbb{N})$ such that for each $n\in \mathbb{N}$, $\mathcal{V}^{/}_n$ is a finite subset of $\mathcal{U}^{/}_n$ and $H(x,1)\in \bigcup \mathcal{V}^{/}_n$, $\mathcal{V}^{/}_n\in \mathcal{Y}^{/}$, for each $n \in \mathbb{N}$ and $\bigcup\atop_{n \in \mathbb{N}}$ $\mathcal{V}^{/}_n\in \overline{\mathcal{O}_{12}}$. This gives $Y=\bigcup_{n \in \mathbb{N}} (\bigcup_{V \in \mathcal{V}^{/}_n} \psi_{2}-cl(V))$. Since for each $n \in \mathbb{N}$, $\mathcal{U}_{n}^{*}$ is a refinement of $\mathcal{U}_n$; so for each $U^{/}_n \in \mathcal{U}^{/}_n$, there exists $U_n \in \mathcal{U}_n$ such that $\psi_{2}-cl(U^{/}_n) \subseteq U_n$, for each $n\in \mathbb{N}$. Now if we fix $n$, then for each $V \in \mathcal{V}^{/}_n$, there exists $U_{V} \in \mathcal{U}_n$ such that $\psi_{2}-cl(V) \subseteq U_{V}$. This implies that $\bigcup_{V \in \mathcal{V}^{/}_n} \psi_{2}-cl(V) \subseteq \bigcup_{V \in \mathcal{V}^{/}_n} U_{V}$.  Then, $\bigcup_{n \in \mathbb{N}}(\bigcup_{V \in \mathcal{V}^{/}_n} \psi_{2}-cl(V)) \subseteq \bigcup_{n \in \mathbb{N}}(\bigcup_{V \in \mathcal{V}^{/}_n} U_{V})$. Thus, $Y \subseteq \bigcup_{n \in \mathbb{N}}(\bigcup_{V \in \mathcal{V}^{/}_n} U_{V})$. Also, $\bigcup_{n \in \mathbb{N}}(\bigcup_{V \in \mathcal{V}^{/}_n} U_{V}) \subseteq Y$. This gives $Y = \bigcup_{n \in \mathbb{N}}(\bigcup_{V \in \mathcal{V}^{/}_n} U_{V})$. For each $n \in \mathbb{N}$, we take $\mathcal{V}_n =\{U_{V}:V \in \mathcal{V}^{/}_n\}$. Then, $ \bigcup_{n \in \mathbb{N}}\mathcal{V}_n$ covers $Y$. Now since each $\mathcal{V}^{/}_n$ is a finite set, so for each $n\in \mathbb{N}$, $\mathcal{V}_n$ is a finite subset of $\mathcal{U}_n$. 

Also for each $n\in \mathbb{N}$, $H(x,1)\in \bigcup \mathcal{V}^{/}_n$. This implies that $H(x,1)\in V $, for some $V \in \mathcal{V}^{/}_n$. This gives 
$H(x,1)\in \psi_{2}-cl(V) \subseteq U_{V} $, $U_{V} \in \mathcal{U}_n$. Thus, $H(x,1)\in \bigcup \mathcal{V}_n$, for each $n \in \mathbb{N}$.

Now, since $Y$ is pairwise $T_3$ and pairwise locally compact, so it is pairwise regular, pairwise Hausdorff and pairwise locally compact. Then by proposition 2.2, $\psi_{1} \subseteq \psi_{2}$. This implies that $\bigcup\atop_{n \in \mathbb{N}}$ $\mathcal{V}_n\in \mathcal{O}_{2}$. Thus, $(Y,\psi_{1} ,\psi_{2})$ is $(1,2)$-H-Menger.
\end{proof}
\begin{theorem}
Let $(X,f)$ and $(Y,g)$ be two $BTDSs$, where $(X,\tau_{1} ,\tau_{2})$ and $(Y,\psi_{1} ,\psi_{2})$ are two bitopological spaces. If  $f\approx^{H}_F g$ and if $(Y,\psi_{1} ,\psi_{2})$ is a pairwise $P$-space, then
    $Y$ has H-almost Menger property iff $Y$ has H-weak Menger property.
\end{theorem}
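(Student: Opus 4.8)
The plan is to imitate the proof of Theorem 4.8, which handled the Rothberger analogue, adapting its single‑selection argument to the finite‑selection setting. The necessity direction, that H‑almost Menger implies H‑weak Menger, is already contained in Theorem 4.1, so nothing new is needed there. For the sufficient part I would assume $Y$ has H‑weak Menger property and show it has H‑almost Menger property; as in every earlier proof it is enough to establish that $(1,2)$‑H‑weak Menger implies $(1,2)$‑H‑almost Menger, the $(2,1)$ case being entirely symmetric.

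So fix an arbitrary sequence $\mathcal{X}=(\mathcal{U}_n : n\in\mathbb{N})$ of $\psi_1$‑open covers of $Y$ and an arbitrary $x\in X$ with $H(x,0)\in\bigcup\mathcal{U}_n$ for each $n$. Applying the $(1,2)$‑H‑weak Menger hypothesis, pick a sequence $\mathcal{Y}=(\mathcal{V}_n : n\in\mathbb{N})$ with each $\mathcal{V}_n$ a finite subset of $\mathcal{U}_n$, with $H(x,1)\in\bigcup\mathcal{V}_n$ for each $n$, and with $\bigcup_{n\in\mathbb{N}}\mathcal{V}_n\in\mathcal{O}^D_{12}$, i.e. $\psi_2-cl(\bigcup_{n\in\mathbb{N}}(\bigcup\mathcal{V}_n))=Y$. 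Since the families $\mathcal{V}_n$ will not be modified at all, the selection‑witness condition $H(x,1)\in\bigcup\mathcal{V}_n$ is retained verbatim; the only thing left to prove is the stronger covering statement $Y=\bigcup_{n\in\mathbb{N}}(\bigcup_{V\in\mathcal{V}_n}\psi_2-cl(V))$, which is exactly the assertion $\bigcup_{n\in\mathbb{N}}\mathcal{V}_n\in\overline{\mathcal{O}_{12}}$.

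The key steps are the following. For each fixed $n$, since $\mathcal{V}_n$ is finite, $\bigcup_{V\in\mathcal{V}_n}\psi_2-cl(V)$ is a finite union of $\psi_2$‑closed sets, hence $\psi_2$‑closed, and it contains $\bigcup\mathcal{V}_n$; consequently $\bigcup_{n\in\mathbb{N}}(\bigcup_{V\in\mathcal{V}_n}\psi_2-cl(V))$ is a countable union of $\psi_2$‑closed sets, and since $(Y,\psi_1,\psi_2)$ is a pairwise $P$‑space, $(Y,\psi_2)$ is a $P$‑space, so this union is itself $\psi_2$‑closed. As it contains $\bigcup_{n\in\mathbb{N}}(\bigcup\mathcal{V}_n)$ and the closure is the smallest $\psi_2$‑closed superset, $\psi_2-cl(\bigcup_{n\in\mathbb{N}}(\bigcup\mathcal{V}_n))\subseteq\bigcup_{n\in\mathbb{N}}(\bigcup_{V\in\mathcal{V}_n}\psi_2-cl(V))$, whence $Y\subseteq\bigcup_{n\in\mathbb{N}}(\bigcup_{V\in\mathcal{V}_n}\psi_2-cl(V))$; the reverse inclusion is trivial, so equality holds and $(Y,\psi_1,\psi_2)$ is $(1,2)$‑H‑almost Menger. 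I do not expect any genuine obstacle here: the argument is a near‑verbatim transcription of the proof of Theorem 4.8, with the single selected set $U_n$ replaced throughout by the finite family $\mathcal{V}_n$; the only extra, routine, observation is that a finite union of $\psi_2$‑closed sets is $\psi_2$‑closed, so the pairwise $P$‑space hypothesis is exactly what is needed to collapse the remaining countable union.
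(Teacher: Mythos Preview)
Your proposal is correct and essentially identical to the paper's own proof of this theorem: both invoke Theorem~4.1 for the forward direction, then for the converse apply the $(1,2)$-H-weak Menger hypothesis to obtain the same sequence $(\mathcal{V}_n)$, observe that each $\bigcup_{V\in\mathcal{V}_n}\psi_2\text{-}cl(V)$ is $\psi_2$-closed by finiteness, use the pairwise $P$-space assumption to make the countable union $\psi_2$-closed, and conclude via the minimality of closure that $Y=\bigcup_{n}\bigcup_{V\in\mathcal{V}_n}\psi_2\text{-}cl(V)$. No step differs in any substantive way.
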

\begin{proof}
The necessary part follows from theorem 4.1. For the sufficient part, suppose that $Y$ has H-weak Menger property. Then, $Y$ has both $(1,2)$-H-weak Menger property and $(2,1)$-H-weak Menger property. We shall prove that if $Y$ has $(1,2)$-H-weak Menger property, then $Y$ has $(1,2)$-H-almost Menger  property. The proof of $(2,1)$-H-weak Menger property implies $(2,1)$-H-almost Menger  property will be similar.

Suppose that $\mathcal{X}=(\mathcal{U}_n: n\in \mathbb{N})$ is an arbitrary sequence of $\psi_{1}$-open covers of $Y$ and that $x \in X$ is an arbitrary point such that $H(x,0)\in \bigcup \mathcal{U}_n$, for each $n \in \mathbb{N}$. Now, $(1,2)$-H-weak Menger property of $Y$ implies that there is a sequence $\mathcal{Y}=(\mathcal{V}_n: n\in \mathbb{N})$ such that for each $n\in \mathbb{N}$, $\mathcal{V}_n$ is a finite subset of $\mathcal{U}_n$ and $H(x,1)\in \bigcup \mathcal{V}_n$, $\mathcal{V}_n\in \mathcal{Y}$, for each $n \in \mathbb{N}$ and $\bigcup_{n \in \mathbb{N}} \mathcal{V}_n \in \mathcal{O}^D_{12}$. This gives $Y = \psi_{2}-cl(\bigcup_{n\in \mathbb{N}} \mathcal{V}_n)$. Now since each $\mathcal{V}_n$ is a finite set, so $\bigcup_{V \in \mathcal{V}_n} \psi_{2}-cl(V)$ is finite union of $\psi_2$-closed set and hence $\psi_2$-closed. Since $Y$ is a pairwise $P$-space, so $\bigcup_{n \in \mathbb{N}} (\bigcup_{V \in \mathcal{V}_n} \psi_{2}-cl(V))$ is a $\psi_2$-closed set. Now if we fix n, then for each $V \in \mathcal{V}_n$ we have $V \subseteq \psi_{2}-cl(V)$. Thus,  $\bigcup_{n \in \mathbb{N}} (\bigcup_{V \in \mathcal{V}_n} \psi_{2}-cl(V))$ is a $\psi_2$-closed set containing $\bigcup_{n\in \mathbb{N}} \mathcal{V}_n$. But $\psi_{2}-cl(\bigcup_{n\in \mathbb{N}} \mathcal{V}_n)$ is the smallest $\psi_2$-closed set containing $\bigcup_{n\in \mathbb{N}} \mathcal{V}_n$. Hence, $\psi_{2}-cl(\bigcup_{n\in \mathbb{N}} \mathcal{V}_n) \subseteq \bigcup_{n \in \mathbb{N}} (\bigcup_{V \in \mathcal{V}_n} \psi_{2}-cl(V))$. This gives $Y \subseteq \bigcup_{n \in \mathbb{N}} (\bigcup_{V \in \mathcal{V}_n} \psi_{2}-cl(V))$. Also, $ \bigcup_{n \in \mathbb{N}} (\bigcup_{V \in \mathcal{V}_n} \psi_{2}-cl(V)) \subseteq Y$. Hence, $Y=\bigcup_{n \in \mathbb{N}} (\bigcup_{V \in \mathcal{V}_n} \psi_{2}-cl(V))$. Thus, $\bigcup\atop_{n \in \mathbb{N}}$ $\mathcal{V}_n\in \overline{\mathcal{O}_{12}}$. Hence, $Y$ is $(1,2)$-H-almost Menger.  
\end{proof}
Theorem 4.10. and theorem 4.11. together gives the following corollary.
\begin{corollary}
Let $(X,f)$ and $(Y,g)$ be two $BTDSs$, where $(X,\tau_{1} ,\tau_{2})$ and $(Y,\psi_{1} ,\psi_{2})$ are two bitopological spaces. If  $f\approx^{H}_F g$ and if $(Y,\psi_{1} ,\psi_{2})$ is a pairwise $T_3$ and pairwise locally compact pairwise $P$-space, then
    $Y$ has H-Menger property iff $Y$ has H-weak Menger property.
\end{corollary}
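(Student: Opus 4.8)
The plan is to derive the equivalence purely by composing the two immediately preceding biconditionals, Theorem 4.10 and Theorem 4.11, observing that the hypotheses of the corollary subsume the hypotheses of both. First I would record that a space which is pairwise $T_3$, pairwise locally compact, and a pairwise $P$-space is in particular pairwise $T_3$ and pairwise locally compact; hence Theorem 4.10 applies (with the same $BTDSs$ $(X,f)$, $(Y,g)$ and the same homotopy $f\approx^{H}_F g$) and gives that $Y$ has H-Menger property iff $Y$ has H-almost Menger property.

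Next I would note that such a space is in particular a pairwise $P$-space, so Theorem 4.11 applies and gives that $Y$ has H-almost Menger property iff $Y$ has H-weak Menger property. Chaining the two equivalences, $Y$ has H-Menger property $\iff$ $Y$ has H-almost Menger property $\iff$ $Y$ has H-weak Menger property, which is exactly the statement of the corollary. (The direction H-Menger $\implies$ H-weak Menger is of course already contained in Theorem 4.1; the content of the corollary is the reverse implication, which is what the above chain supplies under the extra structural hypotheses.)

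There is no real obstacle here: the only thing requiring a word of justification is that the compound hypothesis ``pairwise $T_3$ and pairwise locally compact pairwise $P$-space'' does indeed entail simultaneously ``pairwise $T_3$ and pairwise locally compact'' (as needed to invoke Theorem 4.10) and ``pairwise $P$-space'' (as needed to invoke Theorem 4.11), and this is immediate from Definitions 2.11 and 4.7. If one preferred a self-contained argument, one could instead splice together the proof of Theorem 4.10 (using pairwise regularity to pass to a refinement whose $\psi_2$-closures still refine $\mathcal{U}_n$, then applying H-almost Menger, then using $\psi_1\subseteq\psi_2$ from Proposition 2.2) followed by the proof of Theorem 4.11 (using the $P$-space property so that the countable union of the $\psi_2$-closures of the chosen finite families is $\psi_2$-closed), but invoking the already-proved theorems is the cleaner route and is all that is needed.
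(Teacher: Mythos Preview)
Your proposal is correct and matches the paper's approach exactly: the paper simply states that the corollary follows by combining Theorem 4.10 and Theorem 4.11, which is precisely the chaining argument you give.
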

\begin{theorem} Let $(X,f)$ and $(Y,g)$ be two $BTDSs$, where $(X,\tau_{1} ,\tau_{2})$ and $(Y,\psi_{1} ,\psi_{2})$ are two bitopological spaces. If  $f\approx^{H}_F g$ and if $(Y,\psi_{1},\psi_{2})$ is a pairwise $T_3$ and pairwise locally compact pairwise $P$-space, then following conditions are equivalent:
\begin{enumerate}
    \item[(i)] $Y$ has H-Menger property,
    \item[(ii)] $Y$ has H-almost Menger property,
    \item[(iii)] $Y$ has H-weak Menger property.
\end{enumerate}
\end{theorem}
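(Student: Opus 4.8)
The plan is to obtain Theorem 4.12 as an immediate consequence of the two biconditionals already established, exactly as Theorem 4.9 was deduced from Theorems 4.7 and 4.8 in the Rothberger setting. The hypotheses here -- that $(Y,\psi_{1},\psi_{2})$ is pairwise $T_3$, pairwise locally compact, and a pairwise $P$-space -- are precisely the conjunction of the hypotheses needed to invoke Theorem 4.10 (which requires pairwise $T_3$ and pairwise locally compact) and Theorem 4.11 (which requires pairwise $P$-space). So the first step is simply to record that both of those theorems apply to $Y$ under the current assumptions.

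Next I would chain the equivalences. By Theorem 4.10, $Y$ has H-Menger property if and only if $Y$ has H-almost Menger property, which gives $(i) \Leftrightarrow (ii)$. By Theorem 4.11, $Y$ has H-almost Menger property if and only if $Y$ has H-weak Menger property, which gives $(ii) \Leftrightarrow (iii)$. Composing these two biconditionals yields $(i) \Leftrightarrow (iii)$ as well, so the three conditions are pairwise equivalent. Equivalently, one may note that Theorem 4.1 already supplies the implications $(i) \Rightarrow (ii) \Rightarrow (iii)$ with no extra hypotheses, so only the reverse implications $(iii) \Rightarrow (ii)$ and $(ii) \Rightarrow (i)$ need the additional separation and $P$-space assumptions, and these are exactly the sufficiency directions proved in Theorems 4.11 and 4.10 respectively.

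The proof involves no genuine obstacle: it is a bookkeeping argument that merely glues together previously proved statements. The only point deserving a line of justification is that a pairwise $T_3$, pairwise locally compact pairwise $P$-space satisfies the separate hypothesis sets of both Theorem 4.10 and Theorem 4.11; this is immediate from Definition 4.8 of pairwise $P$-space together with the definitions of pairwise $T_3$ and pairwise local compactness, since each of these is a conjunction of conditions on the two component topologies and nothing in them conflicts. Hence the statement follows, and one may simply write ``Proof follows by combining Theorem 4.10 and Theorem 4.11,'' paralleling the treatment of Theorem 4.9.
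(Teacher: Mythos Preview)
Your proposal is correct and matches the paper's approach exactly: the paper provides no separate argument for Theorem 4.12 beyond the preceding Corollary 4.2, which already records that Theorems 4.10 and 4.11 combine to give the equivalence, in direct parallel with the treatment of Theorem 4.9. Your suggested one-line proof, ``Proof follows by combining Theorem 4.10 and Theorem 4.11,'' is precisely what is intended.
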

\begin{theorem} Let $([0,1],f)$ and $(X,g)$ be two $BTDSs$, where $([0,1],\tau_{1} ,\tau_{2})$ and $(X,\psi_{1} ,\psi_{2})$ are two bitopological spaces. If  $f\approx^{H}_F g$ and if $(Y,\psi_{1},\psi_{2})$ is a pairwise $T_3$ and pairwise locally compact pairwise $P$-space, then following conditions are equivalent:
\begin{enumerate}
    \item[(i)] $X$ has PH-Rothberger property,
    \item[(ii)] $X$ has PH-almost Rothberger property,
    \item[(iii)] $X$ has PH-weak Rothberger property.
\end{enumerate}
\end{theorem}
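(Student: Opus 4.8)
The plan is to prove the two ``missing'' equivalences $(i)\Leftrightarrow(ii)$ and $(ii)\Leftrightarrow(iii)$ and then chain them, in exact parallel with Theorems 4.7 and 4.8. The implications $(i)\Rightarrow(ii)\Rightarrow(iii)$ are already recorded in Theorem 4.4, so only the reverse directions require argument, and in each I will treat the $(1,2)$ case, the $(2,1)$ case being symmetric. Throughout, fix the path homotopy $H:[0,1]\times[0,1]\to X$ realising the path-homotopy hypothesis $f\approx^{PH}_F g\mid x\to y$; the only structural change from the $H$-setting is that the ``selector point'' is now an interior point $m\in(0,1)$ of the unit interval, with $H$ additionally satisfying the endpoint constraints $H(0,n)=x$ and $H(1,n)=y$.

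For $(iii)\Rightarrow(ii)$: assume $X$ has PH-weak Rothberger property and let $(\mathcal{U}_n:n\in\mathbb{N})$ be a sequence of $\psi_1$-open covers of $X$ and $m\in(0,1)$ a point with $H(m,0)\in\bigcup\mathcal{U}_n$ for every $n$. Applying $(1,2)$-PH-weak Rothberger to this data produces $U_n\in\mathcal{U}_n$ with $H(m,1)\in U_n$ for each $n$ and $X=\psi_{2}\text{-}cl\!\left(\bigcup_{n\in\mathbb{N}}U_n\right)$, i.e. $\{U_n:n\in\mathbb{N}\}\in\mathcal{O}^D_{12}$. Since $(X,\psi_2)$ is a $P$-space, $\bigcup_{n\in\mathbb{N}}\psi_{2}\text{-}cl(U_n)$ is $\psi_2$-closed and contains $\bigcup_{n\in\mathbb{N}}U_n$, hence contains its $\psi_2$-closure, so $X=\bigcup_{n\in\mathbb{N}}\psi_{2}\text{-}cl(U_n)$. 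Thus $\{U_n:n\in\mathbb{N}\}\in\overline{\mathcal{O}_{12}}$ with $H(m,1)\in U_n$ for all $n$, so $X$ is $(1,2)$-PH-almost Rothberger; this step uses only that $(X,\psi_1,\psi_2)$ is a pairwise $P$-space.

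For $(ii)\Rightarrow(i)$: assume $X$ has PH-almost Rothberger property and take $(\mathcal{U}_n)$, $m$ as above. From pairwise $T_3$ we get that $\psi_1$ is regular with respect to $\psi_2$, so by Proposition 2.1(2) each $\mathcal{U}_n$ admits a $\psi_1$-open refinement $\mathcal{V}_n$ such that $\mathcal{V}_n^{*}=\{\psi_{2}\text{-}cl(V):V\in\mathcal{V}_n\}$ still refines $\mathcal{U}_n$, with $H(m,0)\in\bigcup\mathcal{V}_n$. Applying $(1,2)$-PH-almost Rothberger to $(\mathcal{V}_n)$ yields $V_n\in\mathcal{V}_n$ with $H(m,1)\in V_n$ and $X=\bigcup_{n\in\mathbb{N}}\psi_{2}\text{-}cl(V_n)$; choosing $U_n\in\mathcal{U}_n$ with $\psi_{2}\text{-}cl(V_n)\subseteq U_n$ gives $H(m,1)\in U_n$ for each $n$ and $X=\bigcup_{n\in\mathbb{N}}U_n$. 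Finally, pairwise $T_3$ together with pairwise local compactness makes $(X,\psi_1,\psi_2)$ pairwise regular, pairwise Hausdorff and pairwise locally compact, so Proposition 2.2 yields $\psi_1\subseteq\psi_2$; hence $\{U_n:n\in\mathbb{N}\}\in\mathcal{O}_2$ and $X$ is $(1,2)$-PH-Rothberger. Combining the two equivalences proves the theorem.

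The adaptation from the $H$-case is essentially verbatim, so I do not expect a genuine obstacle; the one point requiring care is verifying that the path-homotopy boundary conditions $H(0,n)=x$ and $H(1,n)=y$ do not interfere with the cover-selection machinery. They do not, because the selection hypothesis $S_1^{PH}(\mathcal{O}_i,\mathcal{O}_j)$ references only the covers $(\mathcal{U}_n)$ of $X$ and the single value $H(m,1)\in X$ for $m\in(0,1)$, while the regular-refinement step and the $P$-space closure argument take place entirely inside $(X,\psi_1,\psi_2)$ and are independent of the behaviour of $H$ on the boundary of $[0,1]\times[0,1]$.
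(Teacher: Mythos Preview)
Your proposal is correct and follows essentially the same approach as the paper: the paper gives no explicit proof of this theorem, but its structure makes clear that it is the path-homotopy analogue of Theorem~4.9, proved by combining the PH-versions of Theorems~4.7 and~4.8 with the forward implications from Theorem~4.4, which is precisely what you do. Your remark that the endpoint constraints of the path homotopy play no role in the cover-selection argument is the only additional observation needed, and it is correct.
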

\begin{theorem} Let $([0,1],f)$ and $(X,g)$ be two $BTDSs$, where $([0,1],\tau_{1} ,\tau_{2})$ and $(X,\psi_{1} ,\psi_{2})$ are two bitopological spaces. If  $f\approx^{H}_F g$ and if $(Y,\psi_{1},\psi_{2})$ is a pairwise $T_3$ and pairwise locally compact pairwise $P$-space, then following conditions are equivalent:
\begin{enumerate}
    \item[(i)] $X$ has PH-Menger property,
    \item[(ii)] $X$ has PH-almost Menger property,
    \item[(iii)] $X$ has PH-weak Menger property.
\end{enumerate}
\end{theorem}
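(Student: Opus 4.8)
The plan is as follows. Reading the hypothesis $f\approx^{H}_{F}g$ in the path-homotopy sense as $f\approx^{PH}_{F}g\,|x\to y$ and the space ``$Y$'' as the space ``$X$'' of the statement, this theorem is the path-homotopy counterpart of Theorem 4.12, and its proof runs along the lines of Theorems 4.10 and 4.11. The implications $(i)\implies(ii)\implies(iii)$ are precisely Theorem 4.5, so it remains to prove $(ii)\implies(i)$ and $(iii)\implies(ii)$. In every argument below the only modification of the corresponding $H$-proof is that an arbitrary point $m\in(0,1)$ replaces the arbitrary point $x\in X$, and $H(m,0)$, $H(m,1)$ replace $H(x,0)$, $H(x,1)$; the boundary identities $H(0,n)=x$ and $H(1,n)=y$ of a $BTDS$-path homotopy are never invoked, since $S_{fin}^{PH}(\mathcal{A},\mathcal{B})$ quantifies only over $m\in(0,1)$. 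By Definition 4.6 it suffices, in each case, to treat the $(1,2)$-version, the $(2,1)$-version being entirely symmetric.

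For $(ii)\implies(i)$, assume $X$ has $(1,2)$-PH-almost Menger property. Let $(\mathcal{U}_n:n\in\mathbb{N})$ be a sequence of $\psi_{1}$-open covers of $X$ and let $m\in(0,1)$ satisfy $H(m,0)\in\bigcup\mathcal{U}_n$ for every $n$. Since $X$ is pairwise $T_3$ it is pairwise regular, so (Proposition 2.1) for each $n$ there is a $\psi_{1}$-open cover $\mathcal{U}_n^{/}$ of $X$ for which $\mathcal{U}_n^{*}=\{\psi_{2}-cl(U^{/}):U^{/}\in\mathcal{U}_n^{/}\}$ refines $\mathcal{U}_n$, and $H(m,0)\in\bigcup\mathcal{U}_n^{/}$ holds automatically because $\mathcal{U}_n^{/}$ covers $X$. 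Applying $(1,2)$-PH-almost Menger property to $(\mathcal{U}_n^{/}:n\in\mathbb{N})$ produces finite $\mathcal{V}_n^{/}\subseteq\mathcal{U}_n^{/}$ with $H(m,1)\in\bigcup\mathcal{V}_n^{/}$ for each $n$ and $\bigcup_{n\in\mathbb{N}}\mathcal{V}_n^{/}\in\overline{\mathcal{O}_{12}}$, i.e. $X=\bigcup_{n\in\mathbb{N}}\big(\bigcup_{V\in\mathcal{V}_n^{/}}\psi_{2}-cl(V)\big)$. For each $V\in\mathcal{V}_n^{/}$ choose $U_{V}\in\mathcal{U}_n$ with $\psi_{2}-cl(V)\subseteq U_{V}$ and put $\mathcal{V}_n=\{U_{V}:V\in\mathcal{V}_n^{/}\}$; then $\mathcal{V}_n$ is a finite subset of $\mathcal{U}_n$, $\bigcup_{n\in\mathbb{N}}\mathcal{V}_n$ covers $X$, and $H(m,1)\in\bigcup\mathcal{V}_n$ for each $n$ because $H(m,1)\in V\subseteq\psi_{2}-cl(V)\subseteq U_{V}$ for some $V\in\mathcal{V}_n^{/}$. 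Finally, $X$ being pairwise $T_3$ and pairwise locally compact is pairwise regular, pairwise Hausdorff and pairwise locally compact, so Proposition 2.2 gives $\psi_{1}\subseteq\psi_{2}$; hence $\bigcup_{n\in\mathbb{N}}\mathcal{V}_n\in\mathcal{O}_{2}$, and $X$ has $(1,2)$-PH-Menger property.

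For $(iii)\implies(ii)$, assume $X$ has $(1,2)$-PH-weak Menger property. Given a sequence $(\mathcal{U}_n:n\in\mathbb{N})$ of $\psi_{1}$-open covers of $X$ and $m\in(0,1)$ with $H(m,0)\in\bigcup\mathcal{U}_n$ for all $n$, $(1,2)$-PH-weak Menger property yields finite $\mathcal{V}_n\subseteq\mathcal{U}_n$ with $H(m,1)\in\bigcup\mathcal{V}_n$ for each $n$ and $\bigcup_{n\in\mathbb{N}}\mathcal{V}_n\in\mathcal{O}^{D}_{12}$, i.e. $X=\psi_{2}-cl\big(\bigcup_{n\in\mathbb{N}}\mathcal{V}_n\big)$. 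Each $\bigcup_{V\in\mathcal{V}_n}\psi_{2}-cl(V)$ is a finite union of $\psi_{2}$-closed sets, hence $\psi_{2}$-closed, and since $X$ is a pairwise $P$-space the countable union $\bigcup_{n\in\mathbb{N}}\big(\bigcup_{V\in\mathcal{V}_n}\psi_{2}-cl(V)\big)$ is also $\psi_{2}$-closed; as it contains $\bigcup_{n\in\mathbb{N}}\mathcal{V}_n$, it contains the smallest $\psi_{2}$-closed superset $\psi_{2}-cl\big(\bigcup_{n\in\mathbb{N}}\mathcal{V}_n\big)=X$. Therefore $X=\bigcup_{n\in\mathbb{N}}\big(\bigcup_{V\in\mathcal{V}_n}\psi_{2}-cl(V)\big)$, i.e. $\bigcup_{n\in\mathbb{N}}\mathcal{V}_n\in\overline{\mathcal{O}_{12}}$, so $X$ has $(1,2)$-PH-almost Menger property. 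Combining $(ii)\implies(i)$ and $(iii)\implies(ii)$ with Theorem 4.5 establishes the equivalence of (i), (ii) and (iii).

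The step I expect to need the most attention is not any computation but the verification that the three structural ingredients of the $H$-case transfer verbatim: pairwise regularity (from pairwise $T_3$, via Proposition 2.1) to replace $\mathcal{U}_n$ by a cover whose $\psi_{2}$-closures still refine $\mathcal{U}_n$; Proposition 2.2 (from pairwise $T_3$ together with pairwise local compactness) to promote a $\psi_{1}$-open cover to a $\psi_{2}$-open cover; and the pairwise $P$-space property to keep a countable union of $\psi_{2}$-closures $\psi_{2}$-closed. These apply without change because, formally, the quantifier over $m\in(0,1)$ in the $PH$-selection hypotheses plays exactly the role of the quantifier over $x\in X$ in the $H$-selection hypotheses, and the covering condition guarantees $H(m,1)\in\bigcup\mathcal{U}_n$ for free; no new difficulty is introduced by the path structure.
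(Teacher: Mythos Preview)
Your proposal is correct and follows exactly the approach the paper intends: the paper omits the proof of this theorem, but its placement immediately after Theorems 4.10--4.12 and the parallel Theorem 4.13 makes clear that the argument is the PH-analogue of those results, obtained by replacing the arbitrary $x\in X$ in the $H$-selection hypotheses with the arbitrary $m\in(0,1)$ in the $PH$-selection hypotheses and invoking Theorem 4.5 for the forward implications. Your write-up of $(ii)\Rightarrow(i)$ and $(iii)\Rightarrow(ii)$ tracks the proofs of Theorems 4.10 and 4.11 line by line, and your identification of the three structural ingredients (pairwise regularity via Proposition 2.1, Proposition 2.2, and the pairwise $P$-space property) is precisely what the paper uses.
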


\section{Conclusion}

 In this paper, we introduced homotopy in bitopological dynamical systems via selection principles. First, we define $BTDS-$homotopy, $BTDS-$iteration homotopy and $BTDS-$path homotopy and then using this notions we define various selection properties viz.  H-Rothberger property, H-Menger property, PH-Rothberger property, PH-Menger property and their weaker versions. We also discuss several results connecting this concepts. We also introduced pairwise $P$-space in bitopology. Currently, homotopy theory in topological space has been applied to many different fields. We hope that our theory of homotopy in bitopological dynamical systems via selection principles will become a base for future development and application from the perspective of bitopology as well as from bitopological dynamical systems. \\ \vskip 5mm
{\bf Acknowledgement:} The authors like to thank for Prof. Ljubisa Kocinac,  University of Nis, Serbia  for his constructive suggestions and discussions during the preparation of this paper.\\

{\bf Conflict of interest:} The authors declare that there is no conflict of interest. 


\begin{thebibliography}{99}

\bibitem{40} M.R. Adhikari,  {\it Basic Algebraic Topology and its Applications}, Springer India, 2016.

\bibitem{39} C. Jordan, Des contours tracés sur les surfaces, Journal de Mathématiques Pures et Appliquées. 110--130(1866), http://eudml.org/doc/233935.

\bibitem{41} H. Poincaré, Analysis situs, J. Ecole Polytech. \textbf{1}, 1--121(1895). See also Oeuvres, Vol. 6, 193--288. 

\bibitem{42} I.M. James, editor, \textit{History of Topology}, Elsevier, North Holland, 1999.

\bibitem{43} W. Hurewicz, Beitrage der Topologie der Deformationen, Proc. K. Akad. Wet. Ser. A \textbf{38}, 112–-119, 521–-528(1935).

\bibitem{38} C. Badiger and T. Venkatesh, Generalised (g) homotopy and generalised (g) homotopy type spaces, AIP Conf. Proc. \textbf{2282}, 020013(2020).

\bibitem{17} J.C. Kelly, Bitopological spaces, Proc. London Math. Soc. \textbf{13(3)}, 71--89(1963).

\bibitem{28} G. Bezhanishvili, N. Bezhanishvili, D. Gabelaia and A. Kurz, Bitopological duality for distributive lattices and Heyting algebras, Math. Struct. Comput. Sci. \textbf{20(3)}, 359--393(2010).

\bibitem{26} S. Acharjee and B.C. Tripathy, Strategies in mixed budget: A bitopological approach, New Math. Nat. Comput. \textbf{15(01)}, 85--94(2019).

\bibitem{27} G. Bosi and G.B. Mehta, Existence of a semicontinuous or continuous utility function: a unified approach and an elementary proof, J. Math. Econ. \textbf{38}, 311--328(2002).

\bibitem{25} A. S. Salama, Bitopological rough approximations with medical applications, J.  King Saud Univ. Sci. \textbf{22}, 177--183(2010).

\bibitem{29} S. Acharjee, I.L. Reilly and D.J. Sarma, On compactness via $ bI $-open sets in ideal bitopological spaces, Bull. Transilvania Univ. Brasov. Math, Inform. Phys. Ser. III \textbf{12}, 1--8(2019).

\bibitem{30} S. Acharjee and B.C. Tripathy, $  p$-$J $-generator and $ p_1$-$J $-generator in bitopology, Bol. Soc. Paran.  Math. \textbf{36(2)}, 17--31(2018).

\bibitem{31} S. Acharjee, K. Papadopoulos and B.C. Tripathy, Note on $ p_1 $-Lindel{\"{o}}f spaces which are not contra second countable spaces in bitopology, Bol.  Soc. Paran. Math. \textbf{38(1)}, 165--171(2020).

\bibitem{2} S. Acharjee, K. Goswami and H.K. Sarmah, Transitive maps in bitopological dynamical systems, Filomat. \textbf{35(6)}, 2011--2021(2021).

\bibitem{34} S.I. Nada and H. Zohny, An application of relative topology in biology, Chaos, Solitons \& Fractals. \textbf{42}, 202--204(2009).

\bibitem{33} S. Acharjee, K. Goswami and H.K. Sarmah, On conjecture 2 of Nada and Zohny from the perspective of bitopological dynamical systems, J. Math. Comput. Sci. \textbf{11}, 278--291(2021).

\bibitem{32} S. Acharjee, K. Goswami and H.K. Sarmah, On forward iterated
Hausdorffness and development of embryo from zygote in bitopological
dynamical systems, Bull. Transilvania Univ. Brasov-Math., Inform.,
Physics Ser. III. \textbf{13(62)}, 399--410(2020).

\bibitem{15} S. Acharjee, K. Goswami and H.K. Sarmah, On entropy of pairwise continuous map in bitopological dynamical systems, Commun. Math. Biol. Neurosci. \textbf{2020}(2020), Article ID 81.

\bibitem{35} S. Acharjee, K. Goswami and H.K. Sarmah, \textit{Entropy of a pairwise continuous map in NWPC bitopological dynamical systems}, Accepted for publication.

\bibitem{6} K. Menger, Einige Überdeckungss\"{a}tze der Punktmengenleh re, Akademie der Wissenschaften zu Wien, Sitzungsberichte, \textbf{133}, 421–-444(1924).

\bibitem{14} W. Hurewicz, Über eine Verallgemeinerung des Borelschen Theorems, Math. Z. \textbf{24}, 401–-421(1925).

\bibitem{7} F. Rothberger, Eine Versch\"{a}rfung der Eigenschaft C, Fund. Math., \textbf{30}, 50--55(1938).

\bibitem{5} M. Scheepers, Combinatorics of open covers I: Ramsey theory, Topology and its Applications. \textbf{69}, 31–-62(1996).

\bibitem{4} Lj.D.R. Kocinac, Star-Menger and related spaces, Publ. Math. Debrecen. \textbf{55}, 421-–431(1999).

\bibitem{8} Lj.D.R. Kocinac, Star–Menger and related spaces, II, Filomat. \textbf{13}, 129-–140 (1999).

\bibitem{11} W. Just, A.W. Miller, M. Scheepers and P.J. Szeptycki, The combinatorics of open covers II, Topology Appl. \textbf{73}, 241–-266(1996).

\bibitem{12} M. Scheepers, Lusin sets, Proc. Amer. Math. Soc. \textbf{127(1)}, 251–-257(1999).

\bibitem{13} P. Daniels, Pixley-Roy spaces over subsets of the reals, Topology Appl. \textbf{29}, 93–-106(1988).

\bibitem{9} Lj.D.R. Kocinac and S. Ozcag, Bitopological spaces and selection principles, Proc. Int. Conf. on Topology and Appl. (ICTA 2011),
243–-255(2012).

\bibitem{37} Lj.D.R. Kocinac and S. Ozcag, Versions of separability in bitopological spaces. Topology and its Applications.  \textbf{158}, 1471-1477(2011).

\bibitem{3} R. Lakehal, Lj.D.R. Kocinac and D. Seba, Neighbourhood star selection properties in bitopological spaces, Filomat. \textbf{35(1)}, 339–-351(2021).

\bibitem{10} S. Ozçag and A.E. Eysen, Almost Menger property in bitopological spaces, Ukrainian Mathematical Journal,
\textbf{68(6)}, 950-–958(2016).

\bibitem{22} Lj.D.R. Kočinac and S. Özçağ,  More on Selective Covering Properties in Bitopological Spaces. Journal of Mathematics. \textbf{2021}(2021), Article ID 5558456.

\bibitem{36} S. Acharjee and K. Goswami, Erratum to “Almost Menger Property in bitopological spaces”. arXiv preprint, arXiv:2205.11228v2, 24 May, 2022.

\bibitem{20} S. Willard, \textit{General Topology}, Addison-Wesley, Philippines, 1970.

\bibitem{1} S. Hu, Homotopy and Isotopy Properties of Topological Spaces, Canadian Journal of Mathematics. \textbf{13}, 167--176(1961).

\bibitem{21} J. Munkres, \textit{Topology}, Second Edition, Pearson, 2000.

\bibitem{23} L. Babinkostova, B.A. Pansera, and M. Scheepers, Weak covering properties and infinite games, Topology and its Applications. \textbf{159}, 3644--3657(2012).

\bibitem{16} Lj.D.R. Kočinac and Ş. Konca, Set-Menger and related properties, Topology and its Applications. \textbf{275}, 106996(2020).

\bibitem{24} A.K. Misra, A topological view of P-spaces, General Topology and its Applications. \textbf{2}, 349--362(1972).

\bibitem{18} I.L. Reilly, On bitopological separation properties, Nanta Math. \textbf{V}, 14--25(1972).

\bibitem{19} T.G. Raghavan and I.L. Reilly, Metrizability of quasi-metric spaces, J. London Math. Soc. \textbf{s2-15}, 169--172(1977).
\end{thebibliography}
\end{document}